\documentclass[10pt,reqno]{amsart}

\usepackage[cp1251]{inputenc}

\usepackage{amssymb
,epsfig
}
\usepackage{graphicx}

\textwidth140truemm

\textheight210truemm

\newtheorem{dfn}{Definition}
\newtheorem{thm}{Theorem}
\newtheorem{prp}{Proposition}
\newtheorem{lemma}{Lemma}

\newtheorem{rem}{Remark}

\def\R{{\mathbb R}}
%%%%%%%%%%%%%% Expectation

%%%%%%%%%%%%%% Probability

%%%%%%%%%%%%%% Integers

%%%%%%%%%%%%%% Natural numbers

             % indicator

%\def\S{{\mathcal S}}

\def\<{\langle}
\def\>{\rangle}

\begin{document}

\title{Point Charges and Polygonal Linkages}

\author{Giorgi Khimshiashvili*, Gaiane Panina**, Dirk Siersma$^\dag$, Vladimir Zolotov$^\ddag$}

\address{$*$Ilia State University, Tbilisi, Georgia,
e-mail: khimsh@rmi.acnet.ge.
 $**$ Institute for Informatics and Automation, St. Petersburg, Russia,
Saint-Petersburg State University, St. Petersburg, Russia,
e-mail:gaiane-panina@rambler.ru. $^\dag$ University of Utrecht,
Utrecht, The Netherlands, e-mail: D.Siersma@uu.nl. $^\ddag$
Saint-Petersburg State University, St. Petersburg, Russia, e-mail:
paranuel@mail.ru.}

 \keywords{Mechanical linkage, configuration space, moduli space,
Coulomb potential, Cayley-Menger determinant, Coulomb control}

\begin{abstract}
We investigate the critical points of Coulomb potential of point charges placed at the vertices of a planar polygonal linkage.
It is shown that, for a collection of positive charges on a pentagonal linkage, there is a unique critical point in the set of convex
 configurations which is the point of absolute minimum.
 This enables us to prove that two controlling charges are sufficient to navigate between any two convex configurations of a pentagonal linkage.
\end{abstract}

\maketitle
\section{Introduction}

We deal with the Coulomb potential of a system of point charges
placed at the vertices of a planar polygonal linkage. The ultimate
goal is to establish the possibility of controlling the shape of a
pentagonal linkage by varying the values of two charges at the
vertices. Following a paradigm developed in our previous paper
\cite{kps} we consider Coulomb potential as a meromorphic function
on the planar moduli space of linkage and investigate its critical
points. The basic assumption and motivation for accepting such an
approach is that a vertex-charged linkage subject only to Coulomb
interaction of its charged vertices should take the shape with the
minimal Coulomb potential.

The approach developed in \cite{kps} was suggested by
some recent research concerned with the control of nanosystems
and other systems with several degrees of freedom \cite{Jing}, \cite{Wang}.
This setting suggests several aspects and problems. In the present
paper, we concentrate on the following scenario.

Given a convex planar configuration of linkage, we wish to find the
vertex charges such that the global minimum of the arising Coulomb
potential is achieved at the given configuration. Such a collection
of charges will be said to {\it stabilize} the given configuration.
We prove that any convex configuration of pentagonal linkage has a
stabilizing system of charges which depends continuously on the
configuration. This allows one to navigate any (initial) convex
configuration to any other (target) convex configuration along  any
prescribed path in the space of convex configurations. We will refer
to this situation by saying that a pentagonal linkage admits a {\it
complete Coulomb control}.

We wish to add that this research arose as a natural continuation of
our previous joint results on Morse functions on moduli spaces of
polygonal linkages \cite{khipan},  \cite{kpsz}.

\bigskip

The paper is organized as follows.

Section \ref{SectPrelim} contains necessary preliminaries. Among them is  a formula
for the partial derivatives of Cayley-Menger determinant (Theorem \ref{ThmCayleyMenger})
which seems interesting for its own sake and provides one of our main technical tools.
Section \ref{Sectionquadri} briefly sketches an outline of the
proof of the Coulomb control for quadrilateral linkages. In a sense,
it is a prologue which introduces the approach to be used for
pentagonal linkages in the subsequent sections.

In Section \ref{LocalMinSection}  we charge a pentagonal linkage by
a five-tuple of positive charges \newline $q=(q_1,...,q_5)$. We
prove that for any collection of  charges, the  Coulomb potential
has a unique critical point (which is the  minimum point) in the
space of convex configurations.

In Section \ref{pentControlSection}  we control a pentagonal linkage
by just two positive charges. This means that we put charges
$(q_1,q_2,t,q_4,s)$, assuming that $q_1,q_2,q_4$ are some fixed
positive charges, and  that we can vary $s$ and $t$. We prove that  for
any convex configuration $P$  there exists a unique stabilizing pair of positive
charges $s,t$. This yields our main result (Theorem
\ref{ThmMainControl}) stating  that these two charges provide a
complete control on the space of convex configurations.

 To simplify and properly structurize the
presentation all technical proofs are placed in the separate
Section \ref{SectionProofs}.

\textbf{Acknowledgements.} The present paper was completed during a
"Research in Pairs" session in CIRM (Luminy) in January of 2015. The
authors acknowledge the hospitality and excellent working conditions
at CIRM. We also thank Pavel Galashin for useful and
inspiring discussions. G. Panina and V. Zolotov were  supported by
RFBR, research project No. 15-01-02021.

\section{Notation and preliminaries}\label{SectPrelim}
\subsection{Polygonal linkages and their moduli spaces}

A \textit{polygonal linkage} $L$ is  defined by a collection of
positive numbers $l = (l_1,...,l_n)$, called {\it sidelengths},
which we express by writing $L = L(l)$. Physically, a polygonal
linkage is a collection of rigid bars of lengths $l_i$ joined in a
cycle by revolving joints. It is a flexible mechanism which can
admit different shapes,  with or without self-intersections.

By $M(L)$ we denote the \textit{moduli space} of planar
configurations,
 that is, the space of all polygons with the prescribed edge lengths factorized
 by isometries of $\mathbb{R}^2$:

$$M(L)=\{(p_1,...,p_n)| p_i \in \mathbb{R}^2, |p_ip_{i+1}|=l_i, \ |p_np_{1}|=l_n\}/Iso(\mathbb{R}^2).$$

This is not exactly the moduli space $\mathcal{M}(L)$ treated in
\cite{khipan} and \cite{F}, where the space of polygons is
factorized by orientation preserving isometries. However there is a
two-fold covering $\mathcal{M}(L)\rightarrow M(L).$

 By $M^C(L)$ we denote the set of all \textit{strictly convex}  configurations.
 We exclude here non-strictly convex polygons,  that is, those having (at least) one angle
 equal to
$\pi$. The latters obviously form the boundary $\partial M^C(L)$.

 The
set of all  convex configurations, that is,  the closure of $M^C(L)$
in the ambient space $M(L)$, is denoted by
$\overline{M^C}=\overline{M^C}(L)$.

In this paper we basically deal with $n=4, \ 5$. For a 4-bar polygonal
linkage, ${M}(L)$ is a (topological) circle, whereas $\overline{M^C(L)}$ is
homeomorphic to a segment. For a 5-bar polygonal linkage, $M(L)$ is
(generically) a surface, whereas $\overline{M^C}(L)$ is homeomorphic
to a disk $D^2$.

\subsection{Partial derivatives of Cayley-Menger determinant}\label{CMSection}

We present now the definition and certain properties of the
Cayley-Menger determinant which is one of our main tools.

Let $A_1,A_2,A_3,A_4$ be four points in $\R^3$. Denote the distances
between the points and the vectors as

 $$d_{ij}=|A_iA_j|,\ \ \overrightarrow{d}_{ij}=\overrightarrow{A_iA}_j.$$

The Cayley-Menger determinant of the quadruple of points is defined by the formula

$$D =
 \begin{vmatrix}
  0 & 1 & 1 & 1 & 1\\
  1 & 0 & d_{12}^2 & d_{13}^2 & d_{14}^2 \\
  1 & d_{12}^2  & 0 & d_{23}^2 &  d_{24}^2 \\
  1 & d_{13}^2 & d_{23}^2 & 0 & d_{34}^2  \\
  1 & d_{14}^2 & d_{24}^2 & d_{34}^2 & 0 \\
 \end{vmatrix}.$$

As is well known, $D$ is equal to 288 times the squared volume of the tetrahedron  $A_1,...,A_4$.

\bigskip

Denote by $S_{ijk}$  the\textit{ oriented area} of the triangle $A_iA_jA_k$, that is, the orientation is taken into account.
As an example, $S_{ijk}=-S_{ikj}$.

 We also need the vectors $$\overrightarrow{S}_{ijk}=\frac{1}{2}\overrightarrow{d}_{ij}\times \overrightarrow{d}_{jk},$$
 for which we have
 $$|\overrightarrow{S}_{ijk}|=|S_{ijk}|.$$
\begin{thm} \label{ThmCayleyMenger}
\begin{enumerate}
  \item For any four points $A_1,...,A_4$    in $\R^3$, we have:
$$\frac{1}{2}\frac{\partial}{\partial d_{13}^2}D =-16\langle\overrightarrow{S}_{124}, \overrightarrow{S}_{234}\rangle, $$
where $\langle.,.\rangle$ denotes the standard scalar product in the space $\R^3$.
  \item
In the case where the four points  are coplanar, the formula reduces to
$$\frac{1}{2}\frac{\partial}{\partial d_{13}^2}D =-16S_{124}\cdot S_{234}. $$
\end{enumerate}
\end{thm}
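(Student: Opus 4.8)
The plan is to collapse the bordered $5\times 5$ Cayley--Menger determinant to an ordinary $3\times 3$ Gram determinant by fixing a base point, and then differentiate by the cofactor rule. Place $A_4$ at the origin and set $\vec a=\overrightarrow{A_4A_1}$, $\vec b=\overrightarrow{A_4A_2}$, $\vec c=\overrightarrow{A_4A_3}$. Since $D=288\,V^2$ with $V=\tfrac16\bigl|\det[\vec a\,\vec b\,\vec c]\bigr|$, and $(\det[\vec a\,\vec b\,\vec c])^2=\det G$ for the Gram matrix $G=(\langle v_i,v_j\rangle)$ with $v_1=\vec a,\ v_2=\vec b,\ v_3=\vec c$, I obtain the algebraic identity $D=8\det G$, valid as a function of the six squared distances. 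This is the step that shrinks the determinant and is the conceptual heart of the argument.

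Next I differentiate. Holding all squared distances but $d_{13}^2$ fixed, the variable $d_{13}^2$ enters $G$ only through the off-diagonal entries $G_{13}=G_{31}=\langle\vec a,\vec c\rangle=\tfrac12\bigl(d_{14}^2+d_{34}^2-d_{13}^2\bigr)$, each with derivative $-\tfrac12$. By the Jacobi rule $\partial\det G/\partial G_{ij}=\mathrm{cof}_{ij}(G)$ together with the symmetry $G=G^{\top}$, this gives
$$\frac{1}{2}\frac{\partial D}{\partial d_{13}^2}=-4\,\mathrm{cof}_{13}(G),\qquad \mathrm{cof}_{13}(G)=\langle\vec a,\vec b\rangle\langle\vec b,\vec c\rangle-|\vec b|^2\langle\vec a,\vec c\rangle.$$

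It remains to read off the right-hand side geometrically. A short computation gives $\overrightarrow{S}_{124}=\tfrac12\,\vec a\times\vec b$ and $\overrightarrow{S}_{234}=\tfrac12\,\vec b\times\vec c$, so the Lagrange (Binet--Cauchy) identity $\langle\vec a\times\vec b,\ \vec b\times\vec c\rangle=\langle\vec a,\vec b\rangle\langle\vec b,\vec c\rangle-\langle\vec a,\vec c\rangle|\vec b|^2$ yields $\mathrm{cof}_{13}(G)=4\,\langle\overrightarrow{S}_{124},\overrightarrow{S}_{234}\rangle$. Substituting into the displayed formula gives $\tfrac12\,\partial_{d_{13}^2}D=-16\,\langle\overrightarrow{S}_{124},\overrightarrow{S}_{234}\rangle$, which is item (1). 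Item (2) is then immediate: when the four points are coplanar, both $\overrightarrow{S}_{124}$ and $\overrightarrow{S}_{234}$ are normal to the common plane, hence parallel, so their scalar product equals the product of signed areas $S_{124}\cdot S_{234}$.

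The main obstacle I anticipate is purely bookkeeping: pinning down the constants and signs—the factor $8$ in $D=8\det G$, the two $-\tfrac12$'s from the chain rule, and the $\tfrac14$ relating the cross products to the $\overrightarrow{S}$'s—so that they combine into exactly $-16$. A brute-force alternative, expanding the $4\times 4$ cofactor of the original matrix at position $(1,3)$ directly and matching it term-by-term against the polynomial produced by the Lagrange identity, also works and avoids the Gram identity, at the cost of a longer polynomial comparison.
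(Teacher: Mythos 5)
Your argument is correct, and it takes a genuinely different route from the paper's. The paper never leaves the bordered $5\times 5$ determinant: since $d_{13}^2$ occupies two symmetric positions, $\tfrac12\,\partial D/\partial d_{13}^2$ equals the corresponding $4\times 4$ cofactor, which is then reduced by row and column operations to a $2\times 2$ determinant of scalar products and identified with $-16\langle\overrightarrow{S}_{124},\overrightarrow{S}_{234}\rangle$ via Binet--Cauchy. You instead collapse $D$ to the $3\times 3$ Gram determinant first, via $D=8\det G$, differentiate by Jacobi's cofactor rule (the two symmetric occurrences of $d_{13}^2$ in $G_{13}=G_{31}$, each with derivative $-\tfrac12$, correctly produce $\tfrac12\,\partial D/\partial d_{13}^2=-4\,\mathrm{cof}_{13}(G)$), and finish with the same Lagrange/Binet--Cauchy identity; your constants and the identifications $\overrightarrow{S}_{124}=\tfrac12\,\vec a\times\vec b$, $\overrightarrow{S}_{234}=\tfrac12\,\vec b\times\vec c$ all check out. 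Your reduction is arguably more conceptual and shorter than the paper's row manipulations, and it makes the appearance of the Lagrange identity transparent.

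One step deserves a more careful justification: the claim that $D=8\det G$ holds ``as a function of the six squared distances.'' Your derivation via $D=288V^2$ and $\det G=(6V)^2$ establishes the equality only pointwise on six-tuples that are realizable by four points in $\R^3$, whereas the partial derivative $\partial/\partial d_{13}^2$ treats the six quantities $d_{ij}^2$ as independent variables. This is not vacuous: at coplanar configurations --- precisely the case in item (2) and the one used throughout the paper --- the realizable set is not a neighborhood of the point (a rank-two positive semidefinite Gram matrix can be pushed to an indefinite one by arbitrarily small perturbations of the distances), so pointwise agreement near the point of interest is not available there. The standard patch is short: both sides are polynomials in the six variables; the realizable tuples contain a nonempty open subset of $\R^6$ (the image of the open cone of positive definite Gram matrices under the invertible linear change $G_{ij}=\tfrac12(d_{i4}^2+d_{j4}^2-d_{ij}^2)$); two polynomials agreeing on a nonempty open set agree identically, hence their partial derivatives agree everywhere, including at degenerate configurations. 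Alternatively, $D=8\det G$ can be verified once and for all by row/column operations on the bordered determinant. With that observation added, your proof is complete; note that the paper's direct computation sidesteps this issue entirely by differentiating the polynomial $D$ itself.
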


\begin{proof}
 $$\frac{1}{2}\frac{\partial}{\partial d_{13}^2}D =
 \begin{vmatrix}
  0 & 1 & 1 &  1\\
  1 & d_{12}^2  & 0  & d_{24}^2  \\
  1 & d_{13}^2 & d_{23}^2 & d_{34}^2  \\
  1 & d_{14}^2 & d_{24}^2 & 0 \\
 \end{vmatrix}
$$
$$
= - \begin{vmatrix}
  0 & 1 & 1 &  1\\
  1 &  0  & d_{12}^2  & d_{24}^2  \\
  1 & d_{23}^2 & d_{13}^2 & d_{34}^2  \\
  1  & d_{24}^2 & d_{14}^2 & 0 \\
 \end{vmatrix} =
$$

$$
=-\begin{vmatrix}
  0 & 1 & 0 & 0 \\
  1 &  0  & d_{12}^2 &  d_{24}^2  \\
  0 & d_{23}^2 & d_{13}^2 - d_{23}^2 - d_{12}^2 & d_{34}^2 - d_{23}^2 - d_{24}^2  \\
  0  & d_{24}^2 & d_{14}^2 - d_{12}^2 - d_{24}^2 & -2d_{24}^2 \\
 \end{vmatrix} =
$$

$$
=\begin{vmatrix}
 d_{13}^2 - d_{23}^2 - d_{12}^2 & d_{34}^2 - d_{23}^2 - d_{24}^2  \\
 d_{14}^2 - d_{12}^2 - d_{24}^2 & -2d_{24}^2 \\
 \end{vmatrix}=
$$

$$
=-4\begin{vmatrix}
\langle \overrightarrow{d}_{12}, \overrightarrow{d}_{23}\rangle & \langle \overrightarrow{d}_{23}, \overrightarrow{d}_{24}\rangle \\
\langle \overrightarrow{d}_{12}, \overrightarrow{d}_{24}\rangle & \langle \overrightarrow{d}_{24}, \overrightarrow{d}_{24}\rangle \\
 \end{vmatrix}=
$$

$$
=-4\langle \overrightarrow{d}_{12}\times \overrightarrow{d}_{24}, \overrightarrow{d}_{23}\times \overrightarrow{d}_{24}\rangle= -16\langle\overrightarrow{S}_{124}, \overrightarrow{S}_{234}\rangle,
 $$
 where ''$\times$''  denotes the cross-product of vectors and the last line of equalities
 follows from the Binet-Cauchy formula.

\end{proof}

This theorem seems to be new.  To the best of our knowledge, this
formula appeared in the literature only for the case when the points
$A_1,...,A_4$ are coplanar. For example, in \cite{CorsRoberts} it is
given without proof but with the following comments. "This is an
important formula concerning the Cayley - Menger determinant. This
formula is only valid when restricting to planar configurations. The
minus sign in equation  is not included in Dziobek's original paper
\cite{Dz} nor in several later works that utilize the Cayley-Menger
determinant. However, checking equation on the square configuration
indicates the need for the minus sign. The correct formula appears
in the doctoral thesis of Hampton \cite{H}".

We actually use it only in the coplanar case  for describing relations between
diagonals of polygons. However we presented the proof of a more general statement for completeness
and convenience of the reader more so that \cite{H} does not seem to be easily accessible.

\bigskip

%%%%%%%%%%%%%%%%%%%%%%%%%%%%%%%%%%%%%%
\subsection{Coulomb potential}

Placing a collection of point charges $q_i$ at the vertices $A_i$ of
a configuration and considering the Coulomb potential of these
charges we get a  function defined on $M(L)$. We will refer to this
setting by speaking of a {\it vertex-charged linkage} with the
system of charges $q =(q_1,...,q_n)$.

Recall that
the Coulomb potential $\tilde{E}$ of a system of point charges $ q_i
\in \mathbb{R}$ placed at the points $p_i$ of Euclidean plane
$\mathbb{R}^2$ is defined as

\begin{equation} \label{coulomb}
\tilde{E} = \tilde{E}_q =\sum_{i\neq j} \frac{q_iq_j}{d_{ij}},
\end{equation}
where $d_{ij} = \vert A_i A_j \vert$ is the distance between the
$i$-th and $j$-th points.

Since we are only interested in critical points of Coulomb
potential, addition of a constant makes no difference. By the very
definition of polygonal linkage,  the distances corresponding to two
consequent vertices in formula (\ref{coulomb}) remain the same for
all configurations of linkage. Hence their sum is constant for any
fixed collection of charges for our purposes it is sufficient to
work with the {\it effective} Coulomb potential $E$ defined as

\begin{equation} \label{linkcoulomb}
E = E_q = \sum_{i<j-1} \frac{q_iq_j}{d_{ij}},
\end{equation}
where $d_{ij}$ is the length of diagonal between (non-neighboring)
$i$-th and $j$-th vertices of the configuration.

We say that a collection of charges \textit{stabilizes} the
configuration $P$ if $E$ attains at $P$ its minimal value. In this
case we say that $P$ is \textit{the minimum point} of $E$.

We explicate now the setting and notation in the cases considered
in the sequel. For $n=4$, we put one
positive charge $t$  at the first vertex. The rest three
vertex-charges are $+1$.

For $n=5$, we put two positive charges $s$ and $t$ at  any two
non-neighboring vertices and say that $s$ and $t$ are
\textit{controlling charges}.
The rest three charges are some fixed positive numbers.

\bigskip

It is proven in \cite{kps} that,
for positive charges $s,\ t$,
the global minimum of $E$ on the moduli space $M(L)$ belongs
to $M^C$.

\section{Coulomb control of quadrilaterals}\label{Sectionquadri}

As a visual illustration of the paradigm  we briefly recall the main
results of \cite{kps}. For a quadrilateral linkage, we put charges
equal to $1$ at three vertices, whereas the fourth vertex is charged by
$t$. Then  we have
$$E=\frac{1}{d_{13}} +\frac{t}{d_{24}}.$$

The following facts were established in \cite{kps}:
\begin{enumerate}
                          \item For a given convex quadrilateral  $P \in M(L)$, there exists a unique $t$ such that $P$
is a critical point for this charge. In this case, $t$ is positive.
                          \item For a quadrilateral linkage,  $E$ has a unique  minimum  in $M^C$.
                        \end{enumerate}

This means that, for a quadrilateral linkage, we have a complete control of convex
configurations by point charges at its vertices.

The "picture of what happens" which is behind the proofs of
\cite{kps} is such. The configuration space of a quadrilateral is a
circle, and therefore the relation between the diagonals is
important since both diagonals appear in   $E$. We remind that four
points $A_1,...,A_4$ are coplanar whenever their Cayley-Menger
determinant vanishes. Therefore the squared diagonals
$x=d_{13}^2,y=d_{24}^2$ of the quadrilateral are related by a cubic
equation $D(x,y)=0$. The equation defines a real elliptic curve
lying in the $(x,y)$ plane. It contains two connected components:
one is unbounded and the other one is a convex closed curve, which
we denote by   $C$. We make use of the mapping $$\Psi: M\rightarrow
C,$$
$$\Psi(P)=(x,y)=(d_{13}^2,d_{24}^2),$$
which maps   a configuration of the linkage to the point in the
plane whose coordinates are squared diagonals. The configuration space is mapped to the
bounded component of the elliptic curve, whereas the space of convex
configuration is bijectively mapped to some arc of the curve $C$.

The rest of the proof from \cite{kps}   uses the idea of convexity. Namely, we can
consider $E$ as a convex function defined on the $(x,y)$-plane. We
are interested in its restriction to the arc of the convex curve
$C$. The restriction of a convex function to a convex curve is (in
general case) not necessarily convex.  But in our case, due to
the specific position of the gradient $\nabla E$ with respect to the
mentioned arc, and with a special parametrization, the restriction is a convex function, and therefore
has a unique critical point which is the minimum.

In Section \ref{LocalMinSection}  we apply a similar approach: we
introduce a mapping $\Phi$ from the configuration space of a
pentagonal linkage to the space $\R^5$  by mapping a configuration
to its squared diagonals. The image is a 2-dimensional manifold with corners, which appears to be the intersection
of convex hypersurfaces.

\section{Critical points  of Coulomb potential}\label{LocalMinSection}

Now we assume that we are given a polygonal 5-bar linkage $L$ with the vertices
denoted by  $A_1,\dots,A_5$, as in the previous section.  We also adopt notations
for $a_i$ and $b_i$ from the previous section. Since it is a linkage, we now assume that
the sidelengths $|A_1A_2| = a_1,\dots,|A_5A_1| = a_5$  are fixed. Let $$E =
\sum_{i<j-1}{\frac{c_{ij}}{|A_iA_j|}}$$  be the  (effective)
Coulomb potential of the positively charged linkage $L$. Here we  denote $c_{ij}:=q_iq_j \ge 0$.

Let $M^C(L)$ be the set of all convex configurations of $L$. The main result of this section
establishes the uniqueness of critical point of $E$ in $M^C(L)$.

\begin{thm} \label{ThmUniqueMin} The potential $E$ has a unique critical point
in the set  $M^C(L)$, which is the (global) minimum of E on $M(L)$.\qed
\end{thm}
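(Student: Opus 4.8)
The plan is to carry over the paradigm of Section \ref{Sectionquadri}: pass to squared diagonals and reduce everything to the convexity of a single function. For a pentagon $A_1\dots A_5$ the two diagonals $d_{13},d_{14}$ emanating from $A_1$ triangulate it into $A_1A_2A_3$, $A_1A_3A_4$, $A_1A_4A_5$, each of which is rigid once $d_{13},d_{14}$ and the fixed sides are given; since convexity fixes the side on which each triangle is glued, I would first check that $u=(d_{13}^2,d_{14}^2)$ is a global coordinate chart on $M^C(L)$, mapping it bijectively onto a region $U\subset\R^2_{>0}$, with the boundary $\partial M^C$ (non-strictly-convex configurations) going to $\partial U$. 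The remaining squared diagonals $d_{24}^2,d_{25}^2,d_{35}^2$ then become functions of $u$, and $E$ reads $E=c_{13}\phi(d_{13}^2)+c_{14}\phi(d_{14}^2)+c_{24}\phi(d_{24}^2)+c_{25}\phi(d_{25}^2)+c_{35}\phi(d_{35}^2)$, where $\phi(x)=x^{-1/2}$ is strictly convex and strictly decreasing on $\R_{>0}$ and each $c_{ij}=q_iq_j>0$.

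The crux, which I expect to be the main obstacle, is to prove that each of $d_{24}^2,d_{25}^2,d_{35}^2$ is a \emph{concave} function of $u$ on $U$ (and that $U$ is convex). Any two non-adjacent diagonals, together with $d_{13}$ and $d_{14}$, belong to a coplanar quadruple, hence are tied by a vanishing Cayley–Menger determinant $D_k$ (the one omitting the appropriate vertex); for instance $D$ of $\{A_1,A_2,A_3,A_4\}$ relates $d_{13}^2,d_{24}^2,d_{14}^2$, so solving $D=0$ expresses $d_{24}^2$ through $u$, and the other two diagonals follow similarly. To establish concavity I would apply the implicit function theorem: Theorem \ref{ThmCayleyMenger} computes the first derivatives $\partial D_k/\partial d_{ij}^2$ in terms of the oriented areas $S_{abc}$, whose signs are constant over the convex stratum, so these gradients never vanish and have a definite sign; differentiating once more and using these sign conditions should yield negative semidefiniteness of the Hessians, i.e. the desired concavity. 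Geometrically this is the $5$-variable counterpart of the convexity of the bounded oval $C$ in the quadrilateral case, and it is exactly the assertion that the realizability region cut out by the $D_k\ge 0$ is convex.

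Granting this, the conclusion is short. The two terms $c_{13}\phi(d_{13}^2)+c_{14}\phi(d_{14}^2)$ are strictly convex in $u$, since $\phi''>0$; and for each of the last three terms, a convex nonincreasing $\phi$ composed with a concave function is convex. Hence $E$ is strictly convex on the convex domain $U$, so it has at most one critical point, which is then necessarily its minimum. Existence of an interior minimum follows from $E\to\infty$ as any diagonal shrinks to $0$ together with the behaviour at the corners of $U$ coming from $\partial M^C$, which I would analyze to rule out the minimum escaping to a non-strictly-convex configuration. Finally, the result quoted from \cite{kps}, that for positive charges the global minimum of $E$ on $M(L)$ is attained in $M^C$, upgrades this unique minimum on $M^C(L)$ to the global minimum of $E$ on $M(L)$. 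The delicate points I expect to fight with are the concavity statements of the second paragraph, the convexity of $U$, and the corner analysis at $\partial M^C$.
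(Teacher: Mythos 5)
Your reduction rests on two claims you defer to later verification: (i) each of $d_{24}^2,d_{25}^2,d_{35}^2$ is concave in $u=(d_{13}^2,d_{14}^2)$, and (ii) the image $U$ of $M^C(L)$ in the $u$-plane is convex. Claim (ii) is in fact \emph{false}, so the concluding step ``a strictly convex function on a convex domain has at most one critical point'' cannot be run on $U$ as you propose. Counterexample with the equilateral linkage ($a_i=1$): the configuration in which the angles at $A_1$ and $A_3$ equal $\pi$ is the triangle $A_2A_4A_5$ with $A_1,A_3$ midpoints of two sides, so $d_{13}=\tfrac12|A_4A_5|=\tfrac12$ (midline) and $d_{14}^2=\tfrac14\bigl(2|A_2A_4|^2+2|A_4A_5|^2-|A_2A_5|^2\bigr)=\tfrac32$ (median formula); it lies in $\overline{M^C}(L)$ and has $u=(1/4,3/2)$, and its mirror (angles $\pi$ at $A_1,A_4$) has $u=(3/2,1/4)$. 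The midpoint $(7/8,7/8)$ of the joining segment is \emph{not} in $\overline U$: with $d_{13}=d_{14}=\sqrt{7/8}$, placing $A_3=(-\tfrac12,0)$, $A_4=(\tfrac12,0)$, $A_1=(0,-\sqrt{5/8})$, convexity forces $A_2$ to the side of $A_1A_3$ opposite $A_4$ and $A_5$ to the side of $A_1A_4$ opposite $A_3$, which puts $A_2\approx(-0.997,-0.868)$ and $A_5\approx(0.997,-0.868)$ strictly below $A_1$; both edges at $A_1$ then point downward while the interior lies above, so the unique candidate configuration is reflex at $A_1$ and no (even weakly) convex pentagon realizes this $u$. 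Since $\overline U$ is not convex, neither is $U$, and your final step collapses; to save it you would have to extend each diagonal function concavely to a genuinely convex domain containing $U$ and study critical points of the extension --- an additional step your outline does not contain.

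Claim (i) is essentially correct on suitable larger domains, but it is the actual mathematical content of the theorem, and your proposed method --- ``differentiating once more and using these sign conditions'' --- does not deliver it: Theorem \ref{ThmCayleyMenger} controls only the \emph{first} derivatives of the Cayley--Menger determinants, and the Hessian of an implicitly defined function involves the second derivatives of the defining polynomial, whose signs are not determined by the signs of the areas $S_{abc}$. (Note also that $d_{25}^2$ is not tied to $u$ by any single Cayley--Menger relation: the quadruple $A_1A_2A_4A_5$ involves $d_{24}$, so you need the composition $d_{25}^2=g(d_{24}^2,d_{14}^2)$ together with concavity \emph{and} monotonicity of $g$.) The paper obtains the needed second-order information not by differentiation but by a global degree argument: the image of a 3-arm's configuration space lies on the cubic surface $D_i=0$, a generic line meets a cubic in at most three points and a closed surface in an even number of points, hence in at most two, so this surface bounds a convex body; the sign table for $\nabla D_i$ then identifies it as the inner normal and tells which coordinate is a concave graph over which --- and the (convex) projection of that body is exactly the convex domain your repair would require. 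Even with these facts the authors do not attempt a two-dimensional convexity argument: they prove convexity of $E$ only along one-dimensional slices (Lemma \ref{SliceSecDer}), assemble the slice-minima into the polar curve, prove uniqueness on each branch by a sign-and-curvature case analysis (Proposition \ref{LemmaAlmostThmUniqueMin}), and finish by deforming to the equilateral, equal-charges case. So your route is genuinely different and, I believe, salvageable with the paper's convex-body lemmas in place of your Hessian computation; but as written it assumes precisely what has to be proved, and one of its two standing assumptions is false.
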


The rest of the section is devoted to the proof of this theorem.

%%Is it valid if we speak here of ALL critical points in M^C?

\begin{dfn}A \textit{k-slice} $X_k \subset \overline{M^C}(L)$ with respect to the diagonal $A_3A_5$ is the set of all
(convex) configurations such that $b_4=|A_3A_5| = k$, where $k$ is
some constant.
\end{dfn}

%%Maybe better dia-slice??

Each slice is an analytic curve homeomorphic to either a line
segment or a point. There are exactly two slices, called {\em
terminals} which are points. The (disjoint) union of all the slices
equals the set $\overline{M^C}(L)$.

\begin{prp}
\label{PropSliceMin} {For each of the slices $X_k$, there exist two possibilities:\begin{enumerate}
                                                                             \item The restriction of
the potential  $E|_{X_k}$ has a unique critical point which is the
minimum  point.
                                                                             \item The restriction of
the potential  $E|_{X_k}$ has no critical points. Then the minimum value is achieved at the boundary  of the slice.
                                                                           \end{enumerate}
In any case,
the minimum point of the restricted potential is unique and depends continuously on $k$.}\qed
\end{prp}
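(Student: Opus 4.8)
The plan is to reduce $E|_{X_k}$ to a function of a single real variable and to prove that this function is strictly unimodal, following the convexity paradigm used for quadrilaterals in \cite{kps}. Fixing $|A_3A_5|=k$ makes the triangle $A_3A_4A_5$ rigid, so the only remaining freedom is the flex of the four-bar linkage $Q=A_1A_2A_3A_5$ with sides $|A_1A_2|,|A_2A_3|,k,|A_5A_1|$; thus $X_k$ is an analytic arc, which I parametrize by $x=|A_1A_3|^2$ (equivalently, by the shape of $Q$). On this arc the diagonal $A_3A_5$ is constant, so the term $c_{35}/k$ drops out and
$$E|_{X_k}=\frac{c_{13}}{|A_1A_3|}+\frac{c_{25}}{|A_2A_5|}+\frac{c_{24}}{|A_2A_4|}+\frac{c_{14}}{|A_1A_4|}+\mathrm{const}.$$

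I would next express each varying diagonal as a function of $x$ through a single Cayley--Menger relation coming from the coplanarity of a quadruple of vertices: $|A_2A_5|$ from coplanarity of $A_1A_2A_3A_5$ (the four-bar relation of $Q$), $|A_1A_4|$ from $A_1A_3A_4A_5$, and $|A_2A_4|$ from $A_2A_3A_4A_5$ (or from $A_1A_2A_3A_4$). In this way the pair $|A_1A_3|,|A_2A_5|$ are just the two diagonals of $Q$: by the four-bar analysis their squares trace the convex arc of the elliptic curve $D=0$, and, exactly as in \cite{kps}, the partial sum $c_{13}/|A_1A_3|+c_{25}/|A_2A_5|$ is a convex function of the arc parameter. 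The remaining two diagonals $|A_1A_4|,|A_2A_4|$ join the moving vertices to the fixed apex $A_4$; these terms have no analogue in the quadrilateral case and are what the argument must tame.

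Since $u\mapsto u^{-1/2}$ is convex and decreasing, it suffices to analyze the convexity type of the maps $x\mapsto|A_1A_4|^2$ and $x\mapsto|A_2A_4|^2$ and then to show that, in a suitably chosen parameter, the whole sum is convex (or at least strictly quasi-convex). Here Theorem \ref{ThmCayleyMenger} is the decisive tool: implicitly differentiating each coplanarity relation $D=0$ expresses the derivatives $d(|A_1A_4|^2)/dx$ and $d(|A_2A_4|^2)/dx$ as ratios of the partials $\tfrac12\,\partial D/\partial(\mathrm{diag}^2)=-16\,S\cdot S'$, that is, as ratios of products of oriented areas of sub-triangles of the configuration. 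Because the configuration is convex these oriented areas have fixed signs, which pins down the signs of the first and second derivatives of the apex diagonals along the slice. Reconciling this behaviour with the convexity coming from the four-bar part in one common parametrization --- the analogue of the ``specific position of $\nabla E$'' argument of \cite{kps} --- is the \emph{main obstacle} of the proof; it is precisely the step where the derivative formula for the Cayley--Menger determinant is indispensable.

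Granted that $E|_{X_k}$ is strictly quasi-convex on the arc $X_k$, the dichotomy is immediate: either its derivative vanishes at an interior point, which is then the unique critical point and the unique minimum (case (1)), or $E|_{X_k}$ is strictly monotone, has no interior critical point, and attains its minimum at an endpoint of the slice (case (2)); in both cases the minimizer is unique. Finally, the rigid triangle $A_3A_4A_5$, the four-bar linkage $Q$, the parameter $x$ and the coefficients of $E$ all depend analytically on $k$; since the minimizer is unique, Berge's maximum theorem (together with the implicit function theorem at an interior minimum, where the second derivative is strictly positive) shows that the minimum point of $E|_{X_k}$ depends continuously on $k$.
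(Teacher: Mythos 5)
Your setup coincides with the paper's: the paper also parametrizes the slice by the squared diagonal $x_2=|A_1A_3|^2$, also discards the constant term $c_{35}/k$, and also uses Theorem \ref{ThmCayleyMenger} (via Lemma \ref{DPartLem} and implicit differentiation) to express the derivatives of the remaining squared diagonals as ratios of products of oriented areas, whose signs are fixed on convex configurations. But your argument has a genuine gap exactly where you flag ``the main obstacle'': you never prove that $E|_{X_k}$ is strictly (quasi-)convex, you only say ``Granted that\dots''. That claim \emph{is} the proposition; everything after it is routine. Moreover, your proposed route to it --- splitting $E$ into a four-bar part $c_{13}/|A_1A_3|+c_{25}/|A_2A_5|$, convex by \cite{kps}, plus two apex terms to be ``tamed'' --- is problematic in principle: the convexity of the four-bar part in \cite{kps} holds only in a special parametrization, convexity is not invariant under reparametrization, and neither convexity nor quasi-convexity is preserved under sums of functions that are each well-behaved in \emph{different} parameters. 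So the decomposition does not reduce the work; one must control the full sum in a single variable.

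This is what the paper actually does, and the missing ingredients are second-order, not first-order. Working throughout in the variable $x_2$, it writes
$$E|_{X_k}' = (-x_1)'\Big(c_{25}b_1^{-3}+ c_{24}b_3^{-3}\tfrac{(-x_3)'}{(-x_1)'}\Big) - c_{13}b_2^{-3} -  c_{14}b_5^{-3}(x_5)'$$
and shows each grouped piece is increasing, whence $E|_{X_k}''>0$ (Lemma \ref{SliceSecDer}). Besides the first-derivative area formulas you describe (Lemma \ref{LemmaAreasVSDerivat}) and the resulting monotonicity of $b_1,b_3,b_5$ (Lemma \ref{LemmaLengthsMonot}), this needs three second-order facts (Lemma \ref{lastlemma}): $-(x_1)''>0$ and $-(x_5)''>0$, which come from the second-order relation between the diagonals of a quadrilateral proved in \cite{kps}, and the monotonicity of the ratio $(x_3)'/(x_1)'=S_{234}/S_{235}$, verified by direct computation. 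None of these appears in your proposal, and without them the sign analysis of first derivatives alone cannot rule out several interior critical points. Your concluding paragraph (dichotomy from strict convexity, uniqueness, continuity in $k$ via uniqueness of the minimizer) is fine once convexity is in hand; it is the convexity itself that must be supplied.
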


\bigskip

Now we pass from one particular slice to the set of all convex configurations.

\begin{dfn} On each of the slices, we mark the  minimum  point of
$E|_{X_k}$. Taken together for all slices the marked points form the
\textit{polar curve} $ \Gamma \subset \overline{M^C}(L)$ of the
potential $E$ with respect to the diagonal $A_3A_5$.
\end{dfn}

\begin{figure}\label{FigGammaI}
\centering
\includegraphics[width=8 cm]{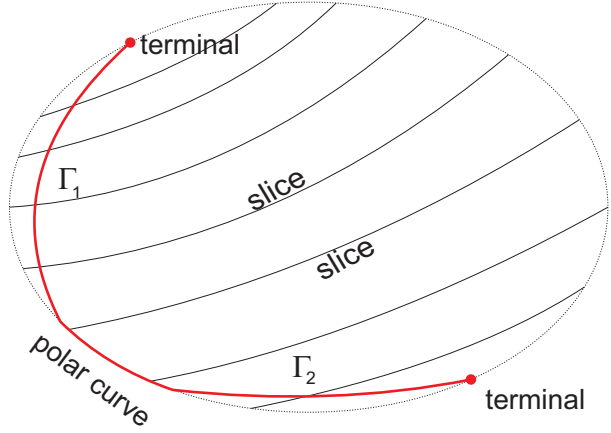}
\caption{The space $ \overline{M^C}(L)$, the polar curve $\Gamma$,
and its components $\Gamma_1$ and $\Gamma_2$. }
\end{figure}

The polar curve is a piecewise analytic curve homeomorphic to a line
segment. Its endpoints are  the terminals. The polar curve may also
contain some segments lying on the boundary of $\overline{M^C}(L)$.
The intersection of the polar curve with the interior part $M^C(L)$
is a finite set of connected components $\Gamma_i$.

Since all critical points of $E$ belong to the polar curve, to prove
Theorem \ref{ThmUniqueMin} it suffices to show that $E|_{\Gamma}$
has a single critical point in $\Gamma$.

\begin{prp} \label{LemmaAlmostThmUniqueMin} The potential $E|_{\Gamma}$ has a unique critical point
in  each of the connected components $\Gamma_i$ of the polar
curve $\Gamma$.\qed
\end{prp}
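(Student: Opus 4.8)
The plan is to reduce the two-dimensional problem on $\overline{M^C}(L)$ to a one-dimensional problem on the polar curve and then to a local convexity statement. Near a component $\Gamma_i$, introduce analytic coordinates $(k,\tau)$ on $\overline{M^C}(L)$, where $k=|A_3A_5|$ labels the slices $X_k$ and $\tau$ is an analytic parameter along each slice. By Proposition~\ref{PropSliceMin} the slice-minimizer $\tau=\tau(k)$ is unique; on the interior component $\Gamma_i$ it is an interior minimum, so $\partial_\tau E=0$ and (by analyticity, as the nondegenerate case) $\partial_\tau^2 E>0$ along $\Gamma_i$. The implicit function theorem then makes $\tau(k)$, hence $\Gamma_i$, analytic. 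Setting $g(k)=E(k,\tau(k))=E|_{\Gamma}$, the envelope identity $g'=\partial_k E+\partial_\tau E\cdot\tau'=\partial_k E$ shows that critical points of $E|_{\Gamma}$ on $\Gamma_i$ are exactly the zeros of $g'$, and that these coincide with the honest critical points $\nabla E=0$ of $E$ on the configuration surface.

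The uniqueness strategy is to prove that $g$ is strictly convex at its critical points, i.e. $g''>0$ whenever $g'=0$. Differentiating the relation $\partial_\tau E(k,\tau(k))=0$ gives $\tau'=-\partial_k\partial_\tau E/\partial_\tau^2 E$, and substituting yields the Schur-complement expression
\[ g''=\partial_k^2 E-\frac{(\partial_k\partial_\tau E)^2}{\partial_\tau^2 E}. \]
At a zero of $g'$ one has $\nabla E=0$, and since $\partial_\tau^2 E>0$ there, the condition $g''>0$ is equivalent to positive-definiteness of the Hessian of $E$ restricted to the surface. Granting this, every critical point of $E|_{\Gamma}$ in $\Gamma_i$ is a strict local minimum of $g$; but on the arc $\Gamma_i$ two strict local minima would force an intervening local maximum, which would be a critical point that is \emph{not} a local minimum — a contradiction. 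Hence $\Gamma_i$ carries at most one critical point. Existence then follows from compactness of $\overline{M^C}(L)$ together with the sign behavior of $g'$ where $\Gamma_i$ approaches the terminals or the boundary.

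The crux is therefore to show that the surface-Hessian of $E$ is positive definite at critical points, and this is where Theorem~\ref{ThmCayleyMenger} enters. Work in squared-diagonal coordinates $x_i=b_i^2$, in which $E=\sum c_{ij}\,x_{ij}^{-1/2}$ is strictly convex on the positive orthant, each summand being convex. The image $\Phi(\overline{M^C})$ is cut out by the Cayley–Menger relations $G_a(x)=0$ (the vanishing determinants of the planar quadruples), so at a critical point $\nabla E=\sum_a\mu_a\nabla G_a$, and the constrained Hessian is $\mathrm{Hess}(E)-\sum_a\mu_a\,\mathrm{Hess}(G_a)$ restricted to the tangent plane. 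Since $\mathrm{Hess}(E)$ is positive definite, positive-definiteness of the whole reduces to controlling the curvature term $\sum_a\mu_a\,\mathrm{Hess}(G_a)$. Theorem~\ref{ThmCayleyMenger} supplies precisely the needed data: the first derivatives $\partial G_a/\partial x_{ij}$ are, up to a positive constant, products of oriented triangle areas $S_{\cdot\cdot\cdot}$, and for a \emph{convex} configuration these oriented areas share a definite sign. This both fixes the signs of the multipliers $\mu_a$ and, through the second-order Cayley–Menger data, pins down the sign of the curvature correction.

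\textbf{Main obstacle.} I expect the genuine difficulty to be this last step: showing that the curvature correction $\sum_a\mu_a\,\mathrm{Hess}(G_a)$ cannot overwhelm the positive ambient Hessian, i.e. that $\Phi(\overline{M^C})$ is convex enough relative to $\nabla E$. This is the pentagonal counterpart of the delicate point flagged in the quadrilateral prologue — "the specific position of the gradient $\nabla E$ with respect to the arc, and a special parametrization" — and it requires converting the convexity of the configuration (all sub-triangle oriented areas of one sign) into a sign condition on the second fundamental form via Theorem~\ref{ThmCayleyMenger}. The soft reduction steps above are routine; this sign analysis is the essential content.
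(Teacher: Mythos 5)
Your soft reduction is sound: the envelope identity $g'=\partial_k E$, the Schur--complement formula for $g''$, and the ``two strict local minima force an intervening local maximum'' argument are all correct, and your instinct about where the difficulty lies is exactly right. But that is precisely the problem: the step you flag as the ``main obstacle'' and leave unproved is not a residual technicality --- it is essentially the entire content of the paper's proof of Proposition \ref{LemmaAlmostThmUniqueMin}. What you have written is a correct reduction to the hard inequality, plus an accurate guess that the signs coming from Theorem \ref{ThmCayleyMenger} must enter; the inequality itself (that the curvature of the constraint locus cannot overwhelm the convexity of $E=\sum c_{ij}x_k^{-1/2}$) is never established. Moreover, the route you sketch --- determine the multipliers $\mu_a$, then control $\sum_a\mu_a\,\mathrm{Hess}(G_a)$ via ``second-order Cayley--Menger data'' --- would require explicit second-derivative information about the determinants $D_i$, which Theorem \ref{ThmCayleyMenger} (a purely first-order statement) does not supply, and it is not at all clear such a direct computation is tractable. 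Note also that your plan needs positive definiteness of the full $2\times 2$ constrained Hessian at critical points, which is a stronger pointwise statement than what the paper actually proves.

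The paper closes this gap by a synthetic mechanism that avoids Hessians of the constraints altogether, proving convexity of $E$ along the whole polar curve rather than definiteness only at critical points. Concretely: (i) the image $\Phi_2(M(R))$ of each $3$-arm $R$ is a closed surface inside the cubic hypersurface $D_i=0$, and a parity argument (a generic line meets a closed surface in an even number of points, but meets a cubic in at most three) shows it bounds a convex body; (ii) Theorem \ref{ThmCayleyMenger} gives the sign table of each $\nabla D_i$ and a check at one configuration shows $\nabla D_i$ is the \emph{inner} normal; (iii) therefore, for $\gamma=\Phi(\Gamma_j)$ lying on these convex boundaries, the normal-curvature (Darboux) inequality $\langle\gamma'',\nabla D_i\rangle\ge 0$ holds for every $i$ --- this is how second-order information is obtained for free from convexity, with no Hessian of any $D_i$ ever computed; (iv) a sign-chasing argument, adding $\lambda\zeta$ to $\gamma''$ where $\zeta\in[-;+;-;0;+]$ is the slice tangent (along which both $E$ and all $D_i$ are stationary) and letting $\lambda$ increase, shows $\gamma''+\lambda\zeta$ must at some moment lie in the all-negative orthant, whence $\langle\nabla E,\gamma''\rangle=\langle\nabla E,\gamma''+\lambda\zeta\rangle\ge 0$ because $\nabla E$ has all entries negative. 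Strict convexity of $E\circ\gamma$ then follows from $(E\circ\gamma)''=\frac{3}{4}\sum c_{ij}[(\gamma_k)']^2(\gamma_k)^{-5/2}+\langle\nabla E,\gamma''\rangle$. Unless you can actually carry out your multiplier computation, an argument of this kind is what must replace your final paragraph; as it stands, your proposal is a correct reduction, not a proof.
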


\bigskip

 With these ingredients at hand, we are ready to prove the main result.

 \subsection*{ Proof of Theorem \ref{ThmUniqueMin}}
 Let us start with  the most symmetric case: 
 the equilateral linkage and equal charges. We prove that  $E$ has a unique
critical point in $\overline{M^C}(L)$.

Indeed, each slice $X_k$ contains an axial symmetric configuration,
that is, there exists a convex polygon $A_1A_2A_3A_4A_5$ such that:

$$|A_3A_5| = k,$$

$$\angle A_2A_3A_4 = \angle A_4A_5A_1, \ \  \angle A_1A_2A_3 = \angle
A_5A_1A_2.$$

By symmetry reasons,  $A_1...A_5$ is a critical point, and
therefore, (the unique)   minimum point of the potential $E$
restricted to $X_k$.
  If $X_k$ is not a terminal  point, the critical point lies in the relative interior of $X_k$.
  Therefore, for this particular case,  the polar curve  lies in the space
  $M^C(L)$, and
  by Proposition \ref{LemmaAlmostThmUniqueMin}
    we obtain the result for the equilateral pentagon and equal charges.

%%%%%%%%%%%%%%%%%%%%%%%%%%%%%%%%%%%%%%%%%%%%%%%%%%
%%\textbf{NEW TEXT,PLease check carefully}
%%%%%%%%%%%%%%%%%%%%%%%%%%%%%%%%%%%%%%%%%%%%%%%%%%%

For an arbitrary pentagonal linkage $L$ and arbitrary (positive)
charges, we use reductio ad absurdum. Assume that $\Gamma$ has more
than one component, and $E$ has more than one critical points. We
start continuously deforming the lengths and the charges aiming to
the above symmetric linkage. Let us inspect the behavior of the
critical points during the deformation. During this process all
objects change continuously: the spaces $M(L)$ and $M^C(L)$, the
slices $X_k$, the curve $\Gamma$ and the critical points. The number
of components of $\Gamma$ can increase, but also decrease (as soon
as the polar curve crosses the boundary).
 In the beginning of the deformation we have
more than one critical points, whereas at the end the critical
point is unique. The confluence of critical points cannot happen in
the interior, since then two branches of $\Gamma$ have to be
connected, but the limits of the critical points are distinct on
that new branch unless they meet at the boundary. However two
critical points on one branch is impossible by Proposition
\ref{LemmaAlmostThmUniqueMin}.

Moreover, it is proven in \cite{kps} that $E$ has no critical points
on the boundary of $\overline{M^C}(L)$. This means that change of
the number of critical points is not possible, so from the beginning
we have only one critical point which is the minimum.
 \qed
%%%%%%%%%%%%%%%%%%%%%%%%%%%%%%%%%%%%%
%\textbf{end careful check}
%%%%%%%%%%%%%%%%%%%%%%%%%%%%%%%%%%

\section{Coulomb control of pentagonal linkages}\label{pentControlSection}

For a generic pentagonal linkage, we put charges $s$ and $t$ to the vertices 5 and 3
respectively.
For this case we have
$$E=\frac{q_1q_4}{d_{14}}+ \frac{q_2q_4}{d_{24}}+ \frac{q_1t}{d_{13}}+ \frac{q_2s}{d_{25}}+ \frac{st}{d_{35}}.$$

%%%%%%%%%%%%%%%%%%%%%%%%%%%%%%%%%%%%
%%\textbf{NEW TEXT,PLease check carefully}
%%%%%%%%%%%%%%%%%%%%%%%%%%%%%%%%%%%
Now we wish to understand whether two charges can provide a complete
control of convex pentagons. We begin with presenting a simple but
conceptually important general observation valid for arbitrary polygonal
linkages. This observation makes essential use of the specific form of Coulomb
interaction and underlies much of the further discussion.

\begin{prp}\label{LemmaGeorge}
For any $n$-gonal linkage $L$ and any configuration $P \in M(L)$,
the stabilizing charges for $P$ are solutions to a system of $n-3$
quadratic equations in $n$ unknowns with the coefficients algebraically
expressible through the lengths $d_{ij}$ of the diagonals of $P$.

\end{prp}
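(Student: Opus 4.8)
The plan is to reduce the stabilization condition to a criticality condition and then to read off the resulting equations in coordinates adapted to a triangulation of the polygon. Since a stabilizing system of charges realizes $P$ as a global minimum of the effective potential $E$, in particular it realizes $P$ as a critical point of $E$ on $M(L)$; hence it suffices to produce the criticality equations and determine their shape. I first fix local coordinates on the moduli space. As $\dim M(L)=2n-3-n=n-3$, a triangulation of the $n$-gon by $n-3$ non-crossing diagonals yields, through their squared lengths $e_1,\dots,e_{n-3}$, a system of local coordinates near any configuration $P$ at which the triangulation is non-degenerate: the prescribed sidelengths together with these diagonals determine each triangle by the SSS rule, and continuity from $P$ fixes the gluing. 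In these coordinates the criticality of $P$ is expressed by the $n-3$ equations $\partial E/\partial e_k=0$, $k=1,\dots,n-3$.

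I would then exploit the specific algebraic form of the Coulomb interaction. Writing $E=\sum_{i<j-1}c_{ij}/d_{ij}$ with $c_{ij}=q_iq_j$, each partial derivative
$$\frac{\partial E}{\partial e_k}=\sum_{i<j-1}c_{ij}\,\frac{\partial}{\partial e_k}\!\left(\frac{1}{d_{ij}}\right)=-\frac12\sum_{i<j-1}\frac{c_{ij}}{d_{ij}^{3}}\,\frac{\partial d_{ij}^2}{\partial e_k}$$
is \emph{linear} in the products $c_{ij}$, whose multipliers $-\tfrac12\,d_{ij}^{-3}\,\partial d_{ij}^2/\partial e_k$ are purely geometric quantities depending only on the configuration $P$. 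Substituting $c_{ij}=q_iq_j$, each equation $\partial E/\partial e_k=0$ becomes a homogeneous quadratic form in the unknowns $q_1,\dots,q_n$, and we thus obtain exactly the claimed system of $n-3$ quadratic equations in $n$ unknowns.

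The main obstacle is to verify that these multipliers are algebraically expressible through the diagonal lengths of $P$, and this is precisely where Theorem \ref{ThmCayleyMenger} enters. If $d_{ij}$ is one of the coordinate diagonals then $\partial d_{ij}^2/\partial e_k\in\{0,1\}$, so only the remaining diagonals require work. Each such diagonal is tied to the coordinate diagonals by a coplanarity constraint: choosing a quadruple $A_p,A_q,A_r,A_s$ of vertices whose pairwise distances involve $d_{ij}$ together with sidelengths and coordinate diagonals, the (automatic) planarity of $P$ forces their Cayley-Menger determinant to vanish, $D=0$. Implicit differentiation of $D=0$ then expresses $\partial d_{ij}^2/\partial e_k$ as a ratio of partial derivatives of $D$ with respect to squared distances, and by the coplanar case of Theorem \ref{ThmCayleyMenger} each such partial derivative is, up to an explicit numerical constant, a product of two oriented triangle areas. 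These areas are in turn algebraic in the side- and diagonal-lengths via Heron's formula; since the sidelengths are fixed parameters of $L$, a finite chain of such relations combined with the chain rule shows that every multiplier is an algebraic function of the diagonal lengths of $P$, which completes the argument.
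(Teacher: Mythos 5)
Your proof is correct in substance but takes a genuinely different route from the paper's. The paper runs a Lagrange-multiplier argument in the space of all $k(n)=n(n-3)/2$ diagonals: the Cayley--Menger relations $D_j=0$ (there are $l(n)=(n-2)(n-3)/2$ of them) are imposed as constraints, criticality of $E_q$ at $P$ is expressed as rank deficiency of the $(l(n)+1)\times k(n)$ matrix $J(P)$ whose first row is the gradient of $E_q$ and whose remaining rows are the gradients of the $D_j$, and the system $S(n)$ consists of the vanishing of all $(l(n)+1)\times(l(n)+1)$ minors; since the charges enter only the first row, through $c_{ij}=q_iq_j$, each minor is linear in the $c_{ij}$ and hence of degree at most two in $q$, and the count of $n-3$ independent equations is obtained by linear algebra. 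You instead eliminate the constraints at the outset by taking $n-3$ triangulating diagonals as local coordinates on $M(L)$ and writing the criticality conditions $\partial E/\partial e_k=0$ directly; the key observation (linearity of these equations in $c_{ij}$) is the same, and you then spend the effort that the paper dismisses with ``can be verified directly'' on showing that the coefficients are algebraic in the diagonal lengths, via implicit differentiation of Cayley--Menger relations along a finite chain of flips together with Heron's formula --- this is a genuine plus, and it is in the spirit of how the paper itself computes in Section \ref{SectionCayMengDerivat}. Your version buys explicitness: the $n-3$ equations are exhibited rather than extracted from a minor-rank condition, so their number is immediate rather than asserted. What it costs is uniformity: you need a triangulation that is non-degenerate at $P$, so as stated your argument misses configurations at which every triangulation degenerates (for example fully aligned ones) --- a caveat the paper's rank argument also quietly suffers from, since the independence of constraints degenerates there as well, and the paper treats all diagonals symmetrically in one system valid at every $P$ at once. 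In the end the two proofs are two faces of the same constrained-criticality condition: your equations amount to choosing $n-3$ particular independent minors of $J(P)$.
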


Proof. The proof is obtained by a standard use of the Lagrange
multipliers method for constrained optimization of $E$ as a function
of diagonals $d_{ij}$. In our case the number of variables $d_{ij}$
is equal to $k(n) = n(n-3)/2$. It is also easy to see that the
number of independent constraints $D_j = 0$ given by the
Cayley-Menger relations for the diagonals is equal to $l(n) =
(n-2)(n-3)/2$. Following prescriptions of the Lagrange method we
consider a $(l(n)+1) \times k(n))$ functional matrix $J(P)$ having
the gradient of target function $E_q$ as the first row, and
gradients of constraints $D_j = 0$ as the remaining rows. Notice
that the values of charges $q_j$ appear only in the first row.

According to Lagrange criterion a system of charges $q$ yields a
constrained critical point of $E$ on $M(L)$ if and only if the rank
of $J(P)$ is not maximal. In other words, all \newline $(l(n)+1)
\times (l(n)+1)$-minors of $J(P)$ should vanish at point $P$ which
gives us a system of algebraic equations $S(n)$ for $q$ each of
which is of degree not exceeding two. By linear algebra the number
of independent equations in the system $S(n)$ is equal to $n-3$. The
statement about coefficients can be verified directly.

\qed

For a given $P\in M(L)$, let us denote by $Stab(P)$ the set of
solutions to $S(n)$, i.e. the set of all charges stabilizing $P$.
For obvious reasons, one may await that generically $Stab(P)$ is
three-dimensional and it is easy to see that this is true for $n=4$.
Indeed, in this case we have just one homogeneous quadratic equation
in four unknowns $q_i$ of the form $Aq_1q_3 + Bq_2q_4$. It follows
that the solution set $Stab(P)$ is a cone over the one-sheeted
two-dimensional hyperboloid. We add that similar results can be
obtained for $n\geq 5$ using a general method for geometric and
topological investigation of intersections of real quadrics
developed in \cite{Agr}.

In general, the structure of $Stab(P)$ can vary from point to point,
which makes it unclear how to navigate from one configuration to
another one. Developing effective navigation would be easier if the
sets of available charges were finite. For dimensional reasons the
number of solutions to $S(n)$ may be finite if we reduce the number
of unknowns (charges) to $n-3$. So it becomes natural to suggest
that Coulomb control should be possible if the number of controlling
charges is $n-3$, which will be assumed from now. In this setting
Proposition \ref{LemmaGeorge} yields a modified system $S'(n)$ and a universal upper
estimate for the number of stabilizing charges which follows merely
from Bezout  theorem.

\begin{lemma} \label{Estimate}
If $Stab(P)$ is finite then the number of stabilizing charges for
$P$ does not exceed $2^{n-3}$.
 \qed
\end{lemma}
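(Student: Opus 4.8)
The plan is to deduce this bound directly from Proposition~\ref{LemmaGeorge} together with the reduction to $n-3$ controlling charges and B\'ezout's theorem. First I would recall the setup: Proposition~\ref{LemmaGeorge} produces a system of $n-3$ quadratic equations in $n$ unknowns whose coefficients are algebraic in the diagonals of $P$. When we restrict attention to the $n-3$ controlling charges (holding the remaining three charges fixed at prescribed positive values), the other $n-(n-3)=3$ variables are specialized to constants. Substituting these constants into the system yields the modified system $S'(n)$ consisting of $n-3$ equations in the $n-3$ controlling unknowns. The key point is that each of these equations remains of degree at most two in the controlling charges: specializing some variables to constants cannot raise the degree, so the degree bound inherited from Proposition~\ref{LemmaGeorge} is preserved.

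Next I would invoke B\'ezout's theorem. We have a system of $n-3$ polynomial equations in $n-3$ unknowns, each of degree at most $2$. Under the hypothesis that $Stab(P)$ is finite, the number of common solutions (counted with multiplicity, over the algebraic closure) is bounded by the product of the degrees, namely $\prod_{i=1}^{n-3} \deg_i \le 2^{n-3}$. Since the number of actual stabilizing charges is at most the number of complex solutions, we obtain the desired estimate $\#\,Stab(P) \le 2^{n-3}$.

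The main obstacle is the applicability of the classical (affine) B\'ezout bound, which in its sharpest form counts solutions in projective space and can be thrown off by solutions at infinity or by positive-dimensional components at infinity. The finiteness hypothesis on $Stab(P)$ rules out positive-dimensional components of the \emph{affine} solution set, but one must still ensure that the B\'ezout product genuinely bounds the finite affine solution count. The cleanest route is to note that the affine B\'ezout theorem guarantees that a system of $n-3$ equations of degrees $\le 2$ in $n-3$ variables, when its solution set is finite, has at most $2^{n-3}$ isolated solutions; this is exactly the regime the finiteness assumption places us in. Thus the hypothesis that $Stab(P)$ is finite is precisely what is needed to legitimize the count, and the bound follows.

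I would keep the argument terse, since the content is essentially a degree bookkeeping step plus a citation of B\'ezout. The only subtlety worth flagging in the proof is why specialization to $n-3$ unknowns preserves the quadratic degree bound, and why finiteness suffices to make the B\'ezout product an honest upper bound rather than merely a generic expectation.
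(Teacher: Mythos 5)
Your proposal is correct and matches the paper's own argument, which is exactly this: fixing three charges specializes the system $S(n)$ of Proposition~\ref{LemmaGeorge} to the system $S'(n)$ of $n-3$ equations of degree at most two in the $n-3$ controlling charges, and the bound $2^{n-3}$ then follows from B\'ezout's theorem under the finiteness hypothesis. Your extra care about degree preservation under specialization and the affine form of B\'ezout only makes explicit what the paper leaves implicit.
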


%Examining the structure of system $S'(5)$ we conclude that if
%$Stab(P)$ is finite then its cardinality in fact does not exceed
%two. The reason is that if the controlling charges are not
%neighboring then the system $S'(n)$ consists of one linear and one
%quadratic equation.

 %Explicit expressions for the coefficients of $S(n)$ in the case of pentagon
%can be found in Section (\ref{SectionProofs}.\ref{TheoremDirkProof}).

Examining the structure of system $S'(5)$ we conclude that if
$Stab(P)$ is finite then its cardinality in fact does not exceed
two. The reason is that if the controlling charges are not
neighboring then the system $S'(5)$ consists of one linear and one
quadratic equation.
In this case explicit expressions for the coefficients
can be found in Section (\ref{TheoremDirkProof}).

 Moreover,
if we only consider convex pentagons with two controlling charges then it turns out that
there exists exactly one pair of positive charges stabilizing $P$, which is
the desired situation for our purposes. To prove the latter fact we need one more lemma.

%%\textbf{end new piece}

\bigskip

\begin{lemma}\cite{kps}\label{LemmaPlusMinus}
For $st<0$, a convex pentagon is never a critical point of
$E$. \qed
\end{lemma}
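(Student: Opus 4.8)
The plan is to show that at every strictly convex configuration there is a single tangent direction along which $E$ is strictly monotone, which already rules out a critical point. First I would cut the two sub-cases of $st<0$ down to one. The potential $E$ is invariant under the dihedral relabeling $\rho$ of the pentagon that interchanges $A_1\leftrightarrow A_2$ and $A_3\leftrightarrow A_5$ (fixing $A_4$), performed simultaneously with the charge exchange $(q_1,q_2,s,t)\mapsto(q_2,q_1,t,s)$: indeed $\rho$ sends $d_{14}\leftrightarrow d_{24}$ and $d_{13}\leftrightarrow d_{25}$ and fixes $d_{35}$, so each summand of $E$ is carried to another. Up to this relabeling (which reverses the cyclic order of the linkage) the symmetry interchanges the regime $s<0<t$ with the regime $t<0<s$, so it suffices to treat, say, $t<0<s$.

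Next I would fix coordinates. On the stratum $M^C(L)$ of strictly convex configurations the fan triangulation from $A_1$ shows that the pair $(x_{13},x_{14})$, with $x_{ij}:=d_{ij}^2$, is a local coordinate system; hence differentiation in $x_{13}$ at fixed $x_{14}$ is a genuine tangent direction of the moduli space, and the derivative is two-sided at an interior point. In these coordinates the summand $q_1q_4/d_{14}$ is constant, while the four remaining summands depend on $x_{13}$ either directly or through the dependent diagonals $x_{24},x_{25},x_{35}$. I would then evaluate $dE/dx_{13}$ at fixed $x_{14}$ term by term.

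The geometric heart is three monotonicity statements, each coming from Theorem \ref{ThmCayleyMenger} together with the fact that in a convex pentagon all oriented triangle areas have the same sign. Holding $d_{14}$ fixed and increasing $d_{13}$: in the convex quadrilateral $A_1A_2A_3A_4$ the two diagonals trade off, so $d_{24}$ decreases; in the quadrilateral $A_1A_3A_4A_5$ (in which $d_{13}$ is a side and $d_{35}$ a diagonal) one finds that $d_{35}$ increases; and composing through the convex quadrilateral $A_1A_2A_4A_5$ yields that $d_{25}$ decreases together with $d_{24}$. Concretely, each ratio $\partial x_{kl}/\partial x_{13}$ equals $-(\partial_{x_{13}}D)/(\partial_{x_{kl}}D)$ for the relevant Cayley--Menger determinant, and Theorem \ref{ThmCayleyMenger} expresses numerator and denominator as products of signed triangle areas whose signs convexity pins down.

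Finally I would assemble the signs. With $t<0$ the direct term $q_1t/d_{13}$ increases as $d_{13}$ grows; with $q_2q_4>0$ and $d_{24}$ decreasing, $q_2q_4/d_{24}$ increases; with $s>0$ and $d_{25}$ decreasing, $q_2s/d_{25}$ increases; and with $st<0$ and $d_{35}$ increasing, $st/d_{35}$ increases. Thus all four non-constant summands strictly increase, so $dE/dx_{13}$ at fixed $x_{14}$ is strictly positive and $\nabla E\neq 0$. I expect the main obstacle to be the orientation bookkeeping in the three monotonicity claims: the sign of $\partial x_{kl}/\partial x_{13}$ flips according to whether $d_{13}$ enters the relevant quadrilateral as a side or as a diagonal, so the delicate point is to invoke Theorem \ref{ThmCayleyMenger} with the correct cross-product orientations of the triangles rather than with their absolute areas. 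These signs can be cross-checked on the regular pentagon.
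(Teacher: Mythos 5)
Your proof is correct, but note the peculiarity of this comparison: the paper never proves Lemma \ref{LemmaPlusMinus} at all --- it is imported from \cite{kps} with a \(\qed\) and no argument --- so what you have written is a self-contained replacement rather than a variant of an in-paper proof. Your route fits squarely into the paper's own toolkit, and in fact can be cross-checked against it. Your three monotonicity claims are the analogue of Lemma \ref{LemmaLengthsMonot}, but for slices taken with respect to the diagonal $A_1A_4$ (the paper's $b_5$) instead of $A_3A_5$ (the paper's $b_4$); the sign bookkeeping you flag as the risk point does close up, and is literally visible in the paper's sign table for $\nabla D_1,\dots,\nabla D_5$ in Section \ref{SectionProofs}: with $x_5=d_{14}^2$ frozen, the row $\nabla D_5\in(0,-,-,0,+)$ gives $\partial x_3/\partial x_2<0$ (so $d_{24}\downarrow$), the row $\nabla D_2\in(0,+,0,-,-)$ gives $\partial x_4/\partial x_2>0$ (so $d_{35}\uparrow$; here $d_{13}$ enters $D_2$ as a side, hence the positive entry, while the diagonal entries are negative), and the row $\nabla D_3\in(-,0,+,0,-)$ gives $\partial x_1/\partial x_3>0$ (so $d_{25}$ decreases together with $d_{24}$), strict convexity making all cyclic-order areas positive so that all these implicit derivatives are nonzero. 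The structural choice that makes the assembly work is freezing precisely the diagonal whose coefficient $q_1q_4$ is positive in both regimes; then for $t<0<s$ all four remaining terms of $E$ strictly increase, and in fact only the direct term $q_1t/d_{13}$ needs strictness, which makes the argument robust. Your dihedral reduction of $s<0<t$ to $t<0<s$ is both legitimate and genuinely necessary (with the charges in the other regime the five signs do not align along this direction): the relabeled linkage has sidelengths $(l_1,l_5,l_4,l_3,l_2)$, a different linkage, but the lemma is universal over linkages, so nothing is lost. One caveat: since you rely on $(x_{13},x_{14})$ being coordinates, your argument covers strictly convex configurations, i.e. $M^C(L)$; at a boundary point of $\overline{M^C}(L)$ a fan triangle may degenerate and the two-sided derivative argument needs modification. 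This matches how the lemma is actually invoked in the paper (in the proof of Theorem \ref{TheoremDirk}, which likewise works in local coordinates $b_2,b_4$ near a convex $P$), so I would record it as a convention to state, not as a gap.
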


\begin{thm}\label{TheoremDirk} For each convex configuration $P$ of a pentagonal linkage,
there exists exactly one $(s,t) \in (\R_{>0})^2$ such that $P$ is a
critical point for the charges $(s,t)$.\qed
\end{thm}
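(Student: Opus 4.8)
The plan is to characterize the stabilizing pairs $(s,t)$ through the system $S'(5)$ and show that, among all solutions, exactly one lies in the open positive quadrant. As noted in the excerpt, when the two controlling charges $s,t$ sit at non-neighboring vertices $5$ and $3$, the Lagrange system $S'(5)$ for a fixed convex configuration $P$ reduces to one \emph{linear} and one \emph{quadratic} equation in the two unknowns $(s,t)$, with coefficients that are algebraic functions of the diagonal lengths $d_{ij}$ of $P$. First I would write these two equations explicitly. The linear equation comes from the Lagrange condition associated with one diagonal and expresses an affine relation $\alpha s + \beta t + \gamma = 0$; the quadratic equation, coming from a second diagonal, has the shape $A\,st + (\text{linear in } s,t) + (\text{const}) = 0$, where the coefficients are built from the partial derivatives $\partial D/\partial d_{ij}^2$ of the relevant Cayley--Menger determinants. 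Using Theorem~\ref{ThmCayleyMenger} in its planar form, each such derivative is a product $-16\,S_{ijk}\cdot S_{i'j'k'}$ of oriented triangle areas, and for a \emph{convex} pentagon all these oriented areas have a controlled sign. This is the crucial input: convexity pins down the signs of every coefficient appearing in $S'(5)$.

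Substituting the linear relation into the quadratic one reduces the problem to a single quadratic equation in one variable, say $t$, and Lemma~\ref{Estimate} (or the direct count for $n=5$) guarantees at most two real roots, hence at most two candidate pairs $(s,t)$. So the task splits into two parts: an \emph{existence} argument producing at least one positive solution, and a \emph{uniqueness/sign} argument ruling out a second positive solution. For existence I would invoke the results already established: by Theorem~\ref{ThmUniqueMin} the potential $E$, for \emph{any} fixed positive charges, has a genuine minimum in $M^C(L)$, and conversely the variational setup guarantees that a prescribed convex $P$ can be realized as such a critical point for some charge values; combined with the fact (cited from \cite{kps}) that the global minimum over $M(L)$ always lands in $M^C$, one gets that the stabilizing locus is nonempty. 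The sign control then forces the realizing pair into $(\R_{>0})^2$.

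The uniqueness is where the two preceding lemmas do the decisive work. Lemma~\ref{LemmaPlusMinus} states that for $st<0$ a convex pentagon is never a critical point, so no solution can lie in the second or fourth quadrant; this eliminates the mixed-sign branch of the quadratic. It remains to exclude a second solution lying in the open \emph{third} quadrant ($s<0,\,t<0$) or a coincident positive pair. Here I would use the sign pattern of the coefficients obtained from convexity together with Descartes/Vieta-type reasoning on the reduced one-variable quadratic: the product and sum of the two roots, read off from the coefficients, have signs that are incompatible with two positive roots once the oriented-area signs are inserted. Concretely, the constant-to-leading coefficient ratio controls the product of roots, and I expect convexity to force exactly one sign change, yielding one positive and one negative root.

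The main obstacle I anticipate is the bookkeeping of signs in the coefficients of $S'(5)$ and verifying that the root-product and root-sum land on the correct side. The reduction to a quadratic is mechanical, and Lemma~\ref{LemmaPlusMinus} disposes of the mixed-sign quadrants immediately, but showing that the \emph{remaining} second root is strictly negative (rather than positive or zero) requires carefully tracking how the oriented areas $S_{ijk}$ enter each coefficient via Theorem~\ref{ThmCayleyMenger}, and confirming that no degenerate convex configuration produces a double positive root. I would handle this by deferring the explicit coefficient computation to Section~\ref{SectionProofs} (as the paper does for the technical proofs) and there checking the sign of the discriminant and of Vieta's product term directly on the convex range of diagonals.
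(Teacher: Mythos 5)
Your proposal follows essentially the same route as the paper: the paper writes the criticality conditions in the coordinates $(b_4,b_2)$, obtaining one equation with an $st$ term and one linear equation, eliminates the linear one to get a quadratic $A+Bs+Cs^2=0$ in $s$, shows $AC<0$ via the oriented-area sign information of Lemma~\ref{LemmaAreasVSDerivat} (so the roots are real of opposite sign, hence exactly one positive $s$), and concludes $t>0$ from Lemma~\ref{LemmaPlusMinus} --- which is precisely your Vieta/sign-analysis plan. The only deviation is your separate ``existence'' detour through Theorem~\ref{ThmUniqueMin}, which is unnecessary (and as stated somewhat circular), since the negative product of roots already yields existence together with uniqueness.
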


%%\textbf{DIRK PLEASE CHECK}
%\textbf{NEW TEXT,PLease check carefully}
%\bigskip
%%%%%%%%%%%%%%%%%%%%%%%%%%%%%%%%%%%%%%%%%%%%%
%\textbf{new text starts here}
%%%%%%%%%%%%%%%%%%%%%%%%%%%%%%%%%%%%%%%%%%%%%%%5

We are now able to explicate the complete control
for convex configurations of arbitrary pentagonal linkage,
which is the main conceptual result of this paper.

\begin{figure}\label{Figstatecontrol}
\centering
\includegraphics[width=10 cm]{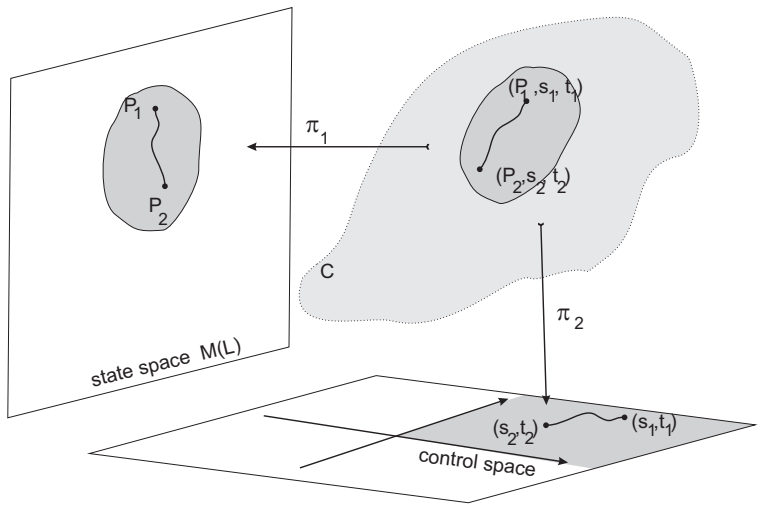}
\caption{The critical space $C$, the state space and the control
space.}
\end{figure}

We describe first our system in the general context of state space
and control space. Our state space is $M(L)$, the space of pentagons
with length vector $L$. The control space is the subspace of charges
 $(q_1,q_2,t,q_4,s)$. So we work with $\mathbb{R}^2 $ as control space.
Given a potential function $$E = E_q : M(L) \times \mathbb{R}^2
\rightarrow \mathbb{R},$$ which maps $(P,q)$ to $E_q(P)$, one can
consider the critical surface $C:= \{(P,q)| \nabla_P E = 0\}.$ We
also consider the subset of minima $C_{min}$.

The restriction to $C$ of the projection to each of the two factors
of $M(L) \times \mathbb{R}^2 $  gives us two maps: $\pi_1: C
\rightarrow M(L)$ and $\pi_2: C \rightarrow \mathbb{R}^2$.
Generically $\pi_2$ is a finite cover which is a local
diffeomorphism away from the bifurcation set in $\mathbb{R}^2$. A
main question is now  if it is possible to connect the two
configuration $P_0$ and $P_1$ by the lift (via $\pi_1$) of a path in
the control space; if possible through the space $C_{min}$. If so,
it is important to construct such a path, which we could call an
explicit control to navigate from $P_0$ to $P_1$.

Theorem  \ref{ThmUniqueMin} about the uniqueness of the absolute
minimum gives an affirmative answer if $P_0$ and $P_1$ are convex.
Indeed $(\pi_2)^{-1}\circ\pi_1$ is a bijection between $M^C (L)$ and
$\R_{>0}^2$.

 The corresponding algorithm to connect  $P_0$ and $P_1$ can be described more
precisely as follows.
First, we calculate the starting and target stabilizing charges $(s_0, t_0)$
and $(s_1, t_1)$  using formulae coming from  the proof of Theorem \ref{TheoremDirkProof} in Section 6.4.
 Next, we choose a path joining $(s_0, t_0)$
and $(s_1, t_1)$ in the control space  $\mathbb{R}_{>0}^2$.
 So if we continuously change the values of stabilizing charges
along the chosen path
the configuration will move to the target configuration through convex configurations
without meeting a bifurcation point.

\bigskip

We summarize the  above as follows.

\begin{thm}\label{ThmMainControl} Assume that a pentagonal linkage $L$   is charged by $(q_1,q_2,t,q_4,s)$, where  $q_1,q_2,q_4$ are some fixed
positive charges.
For any starting convex configuration
$P_0 \in M^C(L)$ together with any target convex configuration $P_1
\in M^C(L)$, there exists an explicit control by two positive ruling
charges $(s,t)$ which yields a path between $P_0$ and $P_1$ lying in
$M^C(L)$.
\end{thm}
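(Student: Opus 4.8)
The plan is to assemble Theorem~\ref{ThmMainControl} from the two structural results already in hand: the uniqueness-of-minimum result (Theorem~\ref{ThmUniqueMin}) and the existence-and-uniqueness of a stabilizing positive pair (Theorem~\ref{TheoremDirk}). Together these say that the composition $(\pi_2)^{-1}\circ\pi_1$ furnishes a \emph{bijection} between $M^C(L)$ and the control space $\R_{>0}^2$. The strategy is therefore to promote this set-theoretic bijection to a homeomorphism, and then exploit the fact that $\R_{>0}^2$ is path-connected (indeed convex) to transport any path in the control space back to a path in $M^C(L)$.

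First I would fix the notation from the ''state space / control space'' discussion: $C=\{(P,q)\mid \nabla_P E=0\}$ with projections $\pi_1\colon C\to M(L)$ and $\pi_2\colon C\to\R_{>0}^2$. The key point to record is that, restricted to the convex stratum, $\pi_1$ is a bijection onto $M^C(L)$ by Theorem~\ref{TheoremDirk} (each convex $P$ has exactly one positive stabilizing pair $(s,t)$, so the fibre of $\pi_1$ over $P$ is a single point), while $\pi_2$ is a bijection onto $\R_{>0}^2$ by Theorem~\ref{ThmUniqueMin} (each positive charge vector has a unique convex critical configuration, namely its global minimum). I would then define the control map $\Xi:=\pi_2\circ\pi_1^{-1}\colon M^C(L)\to\R_{>0}^2$ sending a convex configuration to its unique stabilizing pair, and argue that $\Xi$ is continuous: the coefficients of the system $S'(5)$ depend algebraically on the diagonal lengths $d_{ij}$ of $P$ (Proposition~\ref{LemmaGeorge}), and away from the bifurcation set the solution $(s,t)$ depends continuously (indeed smoothly) on those coefficients by the implicit function theorem, since $\pi_2$ is a local diffeomorphism there. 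The inverse $\Xi^{-1}$ is continuous for the same reason applied to $\pi_1$, or alternatively because $\Xi$ is a continuous bijection between suitable manifolds; so $\Xi$ is a homeomorphism.

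With $\Xi$ a homeomorphism, the proof concludes mechanically. Given $P_0,P_1\in M^C(L)$, set $(s_0,t_0)=\Xi(P_0)$ and $(s_1,t_1)=\Xi(P_1)$, both in $\R_{>0}^2$. Choose any path $\gamma\colon[0,1]\to\R_{>0}^2$ with $\gamma(0)=(s_0,t_0)$ and $\gamma(1)=(s_1,t_1)$ — e.g.\ the straight segment, which stays in the convex set $\R_{>0}^2$. Then $\Xi^{-1}\circ\gamma$ is a path in $M^C(L)$ from $P_0$ to $P_1$; since $\Xi^{-1}$ lands in $M^C(L)$ by construction, the whole path is convex, and because $\gamma$ can be chosen to avoid the bifurcation set (a closed set of positive codimension, generically) the lift meets no bifurcation point. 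This is exactly the explicit control asserted, and the formulae from the proof of Theorem~\ref{TheoremDirk} in Section~\ref{TheoremDirkProof} make the computation of the endpoints $(s_i,t_i)$ effective.

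The main obstacle is the continuity (and bijectivity) of $\Xi$ as a global map on all of $M^C(L)$, not merely its well-definedness pointwise. Theorems~\ref{ThmUniqueMin} and~\ref{TheoremDirk} give the set-level bijection, but upgrading to a homeomorphism requires controlling behaviour near $\partial M^C(L)$ and near the bifurcation set of $\pi_2$, where the local-diffeomorphism property can fail. I expect one must invoke the fact (cited from \cite{kps}) that $E$ has no critical points on $\partial\overline{M^C}(L)$ to rule out escape of the critical configuration to the boundary, and use the finiteness bound of Lemma~\ref{Estimate} together with Lemma~\ref{LemmaPlusMinus} (which excludes $st<0$) to guarantee that the relevant positive branch of $\pi_2^{-1}$ is the \emph{unique} one and hence globally single-valued. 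Once the map is known to be a proper continuous bijection between the disk $\overline{M^C}(L)$ and the quadrant, invariance of domain closes the argument; the delicate part is ensuring properness and that no bifurcation obstructs the chosen path.
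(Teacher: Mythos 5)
Your proposal follows essentially the same route as the paper: it combines Theorem \ref{TheoremDirk} (each convex $P$ has exactly one positive stabilizing pair) with Theorem \ref{ThmUniqueMin} (each positive charge pair has a unique convex critical point, the global minimum) to obtain the bijection between $M^C(L)$ and $\R_{>0}^2$ via the projections $\pi_1$, $\pi_2$ from the critical surface $C$, then computes the endpoint charges from the formulae of Section \ref{TheoremDirkProof}, joins them by a path (e.g.\ the segment) in the convex control space, and lifts it back to a path of convex configurations. Your added discussion of continuity near $\partial M^C(L)$ and the bifurcation set actually supplies detail that the paper leaves implicit, so the proposal is, if anything, more careful than the paper's own argument.
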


\begin{rem}
Since there exist infinitely many paths joining the starting and
target stabilizing charges in the parameter space,  several natural
problems may now be formulated and explored in our setting. In
particular, one can consider various problems in the spirit of
optimal control. For example, since there exist various natural
Riemannian metrics on the configuration space one can investigate
which connecting path in parameter space gives the shortest path in
the configuration space. Thinking of linkage as a device performing
certain task one may wish to specify a path in the working space of
a certain vertex, say, to avoid collision with an obstacle. Further
examples of such problems can be easily formulated and will be
discussed elsewhere.
\end{rem}

\begin{rem}
It seems worthy of noting that placing the controlling charges at
adjacent vertices the situation becomes worse. Namely, we will not be able
to reach all of the convex polygons (see \cite{kps}). For
instance, we will never be able to have two vertices simultaneously
aligned for an equilateral pentagon. By continuity
reasons  an entire neighborhood of such a polygon becomes
unreachable.
\end{rem}

\section{Proofs}\label{SectionProofs}

\subsection{Notation and applications of Cayley-Menger determinant's derivatives }\label{SectionCayMengDerivat}

%%Here, in this section, Coulomb potential does not participate.

We start with an elementary application of Theorem
\ref{ThmCayleyMenger} about Cayley-Menger determinant.

Assume that we are given five points  $A_1,...,A_5$ in the
space $\R^3$. Let us introduce some ad hoc notation convenient for
our purposes:

$$a_i :=|A_iA_{i+1}|,$$
$$b_i:= |A_{i-1}A_{i+1}|,$$
$$x_i=b_i^2,$$

where we treat the indices modulo five.

We think of $a_i$ as the edges and of $b_i$ as diagonals of the
pentagon  $A_1,...,A_5$. Notice that we permit that the pentagon may
be non-planar and non-convex.

We also need the squared diagonals $x_i$  which will play the role
of variables.

Let $D_ i$ be the Cayley-Menger determinant for the vertices of
quadrilateral which is obtained by cutting of a triangle along
$i$-th diagonal. For example, $D_4$ is the Cayley-Menger determinant
for the quadruple  $A_1,A_2,A_3,A_5$:

$$D_4 =
 \begin{vmatrix}
  0 & 1 & 1 & 1 & 1\\
  1 & 0 & a_1^2 & x_2 & a_5^2 \\
  1 & a_1^2  & 0 & a_2^2 & x_1  \\
  1 & x_2 & a_2^2 & 0 & x_4  \\
  1 & a_5^2 & x_1 & x_4 & 0 \\
 \end{vmatrix}.$$

Applying Theorem \ref{ThmCayleyMenger} in this situation we get several useful equalities.

\begin{lemma}\label{DPartLem}
 Assume that $A_1,...,A_5$ are coplanar points. Then we have:
  $$\frac{1}{2}\frac{\partial}{\partial x_2}D_4 =-16S_{125} S_{235}. $$
  $$\frac{1}{2}\frac{\partial}{\partial x_4}D_4 =+16S_{125} S_{123}. $$
 $$\frac{1}{2}\frac{\partial}{\partial x_1}D_4 =-16S_{123} S_{135}. \qed$$
\end{lemma}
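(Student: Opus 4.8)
The plan is to apply part (2) of Theorem~\ref{ThmCayleyMenger} three times, once per variable, after relabelling the four points $A_1,A_2,A_3,A_5$ so that the squared distance being differentiated becomes the $(1,3)$-entry in the notation of that theorem. The only real work is the bookkeeping of signs for the oriented areas.

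First I would read off from the displayed matrix which squared distance each variable represents, namely $x_2=|A_1A_3|^2$, $x_4=|A_3A_5|^2$, and $x_1=|A_2A_5|^2$. I would also record the elementary fact that $D_4$ is unchanged under any simultaneous permutation of the four points $A_1,A_2,A_3,A_5$: such a permutation acts on the bordered $5\times 5$ matrix by the same permutation of its last four rows and of its last four columns, contributing a factor $\mathrm{sgn}^2=1$. This is exactly what licenses the relabellings below, and it also shows that part (2) of Theorem~\ref{ThmCayleyMenger} may be applied to \emph{any} pair among the four points, whether or not that pair happens to be a diagonal of the quadrilateral $A_1A_2A_3A_5$ (this matters for $x_4$, which is an edge of that quadrilateral).

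For $x_2$ the match is direct: reading the four points in the order $(A_1,A_2,A_3,A_5)$ as $(1,2,3,4)$ makes $d_{13}^2=x_2$, and Theorem~\ref{ThmCayleyMenger}(2) yields $\frac{1}{2}\frac{\partial}{\partial x_2}D_4=-16\,S_{124}S_{234}=-16\,S_{125}S_{235}$, the first formula. For $x_4=|A_3A_5|^2$ I would relabel $(1,2,3,4)=(A_3,A_1,A_5,A_2)$, so that $d_{13}^2=x_4$; the theorem then gives $-16\,S_{A_3A_1A_2}S_{A_1A_5A_2}=-16\,S_{312}S_{152}$, and using $S_{312}=S_{123}$ together with $S_{152}=-S_{125}$ this collapses to $+16\,S_{125}S_{123}$, which is precisely where the plus sign comes from. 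For $x_1=|A_2A_5|^2$ I would take $(1,2,3,4)=(A_2,A_1,A_5,A_3)$, obtain $-16\,S_{213}S_{153}$, and reduce via $S_{213}=-S_{123}$ and $S_{153}=-S_{135}$ to $-16\,S_{123}S_{135}$.

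The hard part is not difficulty but error control: the whole proof lives or dies by the signs of the oriented areas $S_{ijk}$ under the relabellings, handled through the antisymmetry $S_{ijk}=-S_{ikj}$ recorded just before Theorem~\ref{ThmCayleyMenger}. As a consistency check I would verify that each answer is independent of which of the two remaining points is placed in position $2$ versus position $4$; swapping them negates each of the two oriented areas in the formula and hence leaves the product invariant. Once the three sign computations are confirmed, the three identities follow at once.
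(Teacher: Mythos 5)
Your proposal is correct and takes essentially the same approach as the paper: the paper derives Lemma \ref{DPartLem} simply by applying Theorem \ref{ThmCayleyMenger} to the quadruple $A_1,A_2,A_3,A_5$, which is exactly what you do, with the relabelling and sign bookkeeping (including the source of the $+$ sign in the $x_4$ formula) carried out explicitly and correctly.
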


\bigskip

\subsection{Proof of Proposition \ref{PropSliceMin}.}

 Throughout the paragraph we fix $b_4$  and  parameterize the slice $X_k$ by $x_2$. In this setting  the
lengths of diagonals $b_i$, the squared lengths of diagonals $x_i$,
and the restriction of the potential $E|_{X_k}$ are the functions in
the variable $x_2$.  All derivatives (denoted by "prime") mean
derivatives with respect to $x_2$. For instance, we write
$(b_1)'=\frac{db_1}{dx_2}.$

\begin{lemma}\label{LemmaAreasVSDerivat}
\begin{enumerate}
\item $$(-x_1)' = \frac{S_{125}}{S_{123}} \cdot \frac{S_{235}}{S_{135}}$$
\item  $$(-x_3)'=\frac{S_{234}}{S_{135}} \cdot \frac{S_{125}}{S_{123}}$$
\item   $$(x_5)'=\frac{S_{145}}{S_{135}}.$$
\end{enumerate}
\end{lemma}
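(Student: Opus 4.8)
The plan is to compute the derivatives of the squared diagonals $x_1, x_3, x_5$ with respect to $x_2$ along the slice $X_k$, where $b_4 = |A_3A_5|$ is held constant. Since the slice is parameterized by $x_2$, and the diagonals $x_1, x_3, x_5$ are determined by the configuration, I would exploit the implicit relations coming from the Cayley-Menger determinants. The key idea is that each of $x_1$, $x_3$, $x_5$ is constrained by the planarity condition $D_i = 0$ for an appropriate quadrilateral, and Lemma \ref{DPartLem} (via Theorem \ref{ThmCayleyMenger}) gives the partial derivatives of these determinants in terms of oriented triangle areas. Differentiating $D_i = 0$ implicitly along the slice then yields the desired ratios of areas by the chain rule.

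Concretely, for item (1) I would use the relation $D_4 = 0$, which involves both $x_1$ and $x_2$ (with $a_1, a_2, a_5$ fixed edges and $x_4 = b_4^2$ held constant on the slice). Implicit differentiation gives
$$
\frac{\partial D_4}{\partial x_2} + \frac{\partial D_4}{\partial x_1}\, x_1' = 0,
\qquad\text{so}\qquad
x_1' = -\frac{\partial D_4/\partial x_2}{\partial D_4/\partial x_1}.
$$
Substituting the two formulas from Lemma \ref{DPartLem}, namely $\tfrac12\partial_{x_2}D_4 = -16 S_{125}S_{235}$ and $\tfrac12\partial_{x_1}D_4 = -16 S_{123}S_{135}$, the factors of $-16$ cancel and I obtain
$$
(-x_1)' = \frac{S_{125}S_{235}}{S_{123}S_{135}},
$$
which is exactly statement (1). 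For item (2), the analogous computation uses the Cayley-Menger determinant $D_2$ for the quadruple containing $A_3$, relating $x_3$ to $x_2$ with $x_4$ fixed; the derivative formulas produce the ratio involving $S_{234}$ and $S_{135}$. For item (3), I expect $x_5$ to depend on $x_2$ through a quadrilateral whose planarity constraint links the two diagonals, giving the simpler single ratio $S_{145}/S_{135}$ because fewer triangles enter.

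The main obstacle will be bookkeeping: I need to verify for each of $x_1, x_3, x_5$ exactly which Cayley-Menger determinant governs its dependence on $x_2$ along the slice, and to confirm that $b_4$ (equivalently $x_4$) truly appears as a constant rather than as a varying quantity in that determinant. I also need to track the signs of the oriented areas carefully, since the statements assert specific signs ($-x_1$, $-x_3$, $+x_5$) that must match the signs emerging from the determinant derivatives in Lemma \ref{DPartLem}. Once the correct determinant is identified for each diagonal, the chain-rule computation is routine and the area ratios follow directly; the conceptual content is entirely in the Cayley-Menger derivative formula, with the remaining work being careful index tracking and sign verification on convex (hence consistently oriented) configurations.
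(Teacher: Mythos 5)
Your overall strategy---implicit differentiation of the planarity constraints $D_i=0$ along the slice, fed by the Cayley--Menger derivative formulas of Lemma \ref{DPartLem}---is exactly the paper's (very terse) proof, and your treatment of item (1) via $D_4$ is correct, signs included. Item (3) is also essentially right, though you leave the determinant unnamed: it is $D_2$ (quadruple $A_1,A_3,A_4,A_5$), whose partials with respect to $x_2$ and $x_5$ are, up to a common positive factor, $+S_{345}S_{145}$ and $-S_{345}S_{135}$; the shared factor $S_{345}$ cancels, which is why a single ratio $S_{145}/S_{135}$ survives.

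The genuine gap is in item (2). You propose to use $D_2$ to relate $x_3$ to $x_2$, but $D_2$ omits the vertex $A_2$, and $x_3=|A_2A_4|^2$ has $A_2$ as an endpoint, so $x_3$ does not occur in $D_2$ at all (the paper's sign table records $\partial D_2/\partial x_3=0$). Moreover the failure is structural, not a mere index slip: every Cayley--Menger determinant that does contain $x_3$ (namely $D_1$, $D_3$, $D_5$) also contains another diagonal that varies along the slice ($x_1$ or $x_5$), so there is no two-variable relation between $x_3$ and $x_2$ alone, and your template ``one determinant per diagonal, direct implicit differentiation'' cannot yield item (2). The repair is a two-step chain: differentiate $D_1=0$ (variables $x_1,x_3$, with $x_4$ constant on the slice), whose partials are, up to a positive factor, $\partial D_1/\partial x_1 \propto +S_{234}S_{345}$ and $\partial D_1/\partial x_3 \propto -S_{235}S_{345}$, to get $(x_3)'=\frac{S_{234}}{S_{235}}(x_1)'$, and then substitute item (1); this gives $(-x_3)'=\frac{S_{234}}{S_{135}}\cdot\frac{S_{125}}{S_{123}}$ as claimed. (Alternatively, chain through $D_5$ using item (3).) Note that the intermediate ratio $(x_3)'/(x_1)'=S_{234}/S_{235}$ is precisely what the paper exploits later in Lemma \ref{lastlemma}, which confirms that the chained route is the intended one.
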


Proof follows from Lemma \ref{DPartLem} by using the implicit
differentiation formula. \qed

\bigskip

\begin{lemma}\label{LemmaLengthsMonot}
For a fixed slice $X_k$, we have  $(b_1)' < 0$, $(b_3)' < 0$,
$(b_5)'
> 0$ on the relative interior of $X_k$.
This implies that $b_1\downarrow$, $b_3\downarrow$, $b_5\uparrow$.
\end{lemma}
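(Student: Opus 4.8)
The plan is to reduce the three claimed inequalities for the diagonal lengths to inequalities for the squared diagonals, and then to extract the latter directly from Lemma \ref{LemmaAreasVSDerivat} by checking the signs of the oriented areas that occur there.

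First I would note that $b_i=\sqrt{x_i}$ with $b_i>0$ throughout the slice, so differentiating $x_i=b_i^2$ gives $(x_i)'=2b_i\,(b_i)'$; hence $(b_i)'$ has the same sign as $(x_i)'$. It therefore suffices to prove $(x_1)'<0$, $(x_3)'<0$ and $(x_5)'>0$, i.e. $(-x_1)'>0$, $(-x_3)'>0$ and $(x_5)'>0$. These three expressions are exactly the quantities computed in Lemma \ref{LemmaAreasVSDerivat}, so the whole statement comes down to showing that each of the right-hand sides there is positive.

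The key observation is that on the relative interior of $X_k$ the pentagon is \emph{strictly} convex, so its vertices $A_1,\dots,A_5$ are in convex position; in particular no three of them are collinear, every oriented area $S_{ijk}$ with distinct indices is nonzero, and all the areas $S_{ijk}$ whose indices are listed in increasing order $i<j<k$ share one common sign (that of the chosen orientation of the pentagon), since such a triple is always traversed in the cyclic order induced by the polygon. Inspecting Lemma \ref{LemmaAreasVSDerivat}, every area appearing there --- namely $S_{123},S_{125},S_{135},S_{145},S_{234},S_{235}$ --- already has its indices in increasing order. Consequently each ratio $S_{ijk}/S_{i'j'k'}$ occurring in the three formulas is a quotient of two areas of the same sign, hence positive, and each right-hand side, being a product of such ratios, is positive. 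This yields $(-x_1)'>0$, $(-x_3)'>0$, $(x_5)'>0$, and with the reduction above gives $(b_1)'<0$, $(b_3)'<0$, $(b_5)'>0$ on the relative interior of $X_k$; the monotonicity $b_1\downarrow$, $b_3\downarrow$, $b_5\uparrow$ in the parameter $x_2$ follows at once.

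The only point requiring care --- and the step I would treat as the main (though modest) obstacle --- is this sign bookkeeping: one must verify that strict convexity really forces all the triangles $A_iA_jA_k$ with $i<j<k$ to carry the same orientation, so that no unexpected sign slips in and the common sign cancels in each ratio. Strict convexity is also what guarantees the areas are nonzero, so that the ratios are well defined and the inequalities are strict across the whole relative interior; at a terminal slice, which is a single point, there is nothing to prove.
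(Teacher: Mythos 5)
Your proof is correct and follows exactly the route the paper intends: the paper's own proof is the one-line remark that the lemma ``follows from Lemma \ref{LemmaAreasVSDerivat}'', and your argument simply makes explicit the two implicit steps --- that $(b_i)'$ and $(x_i)'$ share the same sign, and that strict convexity on the relative interior of $X_k$ forces all the oriented areas $S_{ijk}$ with $i<j<k$ to be nonzero and of one common sign, so every ratio in Lemma \ref{LemmaAreasVSDerivat} is positive.
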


Proof follows from Lemma \ref{LemmaAreasVSDerivat}.\qed

\begin{lemma}\label{lastlemma}
For a fixed slice $X_k$, we have
\begin{enumerate}
\item{$-(x_1)'' > 0$}
\item{$-(x_5)'' > 0$}
\item{$\frac{(x_3)'}{(x_1)'} \uparrow$}
\end{enumerate}
\end{lemma}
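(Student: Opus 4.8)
The plan is to realize each slice $X_k$ as a four-bar linkage carrying a rigid triangle, and then to read off all squared diagonals as elementary trigonometric functions of the two crank angles. Fixing $b_4=|A_3A_5|=k$ makes the triangle $A_3A_4A_5$ rigid; place it so that $A_3,A_4,A_5$ are fixed in the plane. The remaining freedom is the open chain $A_3-A_2-A_1-A_5$ with bar lengths $a_2,a_1,a_5$, so that $A_2$ runs on the circle of radius $a_2$ about $A_3$ and $A_1$ on the circle of radius $a_5$ about $A_5$. Let $\psi$ be the direction of $\overrightarrow{A_3A_2}$ and $\omega$ the direction of $\overrightarrow{A_5A_1}$, linked by the single coupler relation $|A_1A_2|=a_1$. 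Since $A_5,A_4$ are fixed and $A_2$ moves on a circle about $A_3$, both $x_1=|A_2A_5|^2$ and $x_3=|A_2A_4|^2$ are pure sinusoids in $\psi$; symmetrically $x_2=|A_1A_3|^2$ and $x_5=|A_1A_4|^2$ are pure sinusoids in $\omega$. The whole lemma then becomes a matter of differentiating sinusoids and tracking signs via convexity.

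I would first dispatch claim (3), which is the cleanest. Dividing the second identity of Lemma \ref{LemmaAreasVSDerivat} by the first, the common factors $S_{125}$, $S_{123}$, $S_{135}$ cancel and leave
$$\frac{(x_3)'}{(x_1)'}=\frac{S_{234}}{S_{235}}.$$
As $A_2=A_3+a_2(\cos\psi,\sin\psi)$ and $A_3$ lies on both lines $A_3A_4$ and $A_3A_5$, each oriented area is $\tfrac12 a_2$ times a side length times a sine of the angle from $\overrightarrow{A_3A_2}$ to the (fixed) direction of that side, so
$$\frac{S_{234}}{S_{235}}=\frac{a_3}{k}\cdot\frac{\sin(\mu_4-\psi)}{\sin(\mu_5-\psi)},$$
where $\mu_4,\mu_5$ are the fixed directions of $\overrightarrow{A_3A_4},\overrightarrow{A_3A_5}$. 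Differentiating in $\psi$, the derivative collapses to $\sin(\mu_4-\mu_5)/\sin^2(\mu_5-\psi)$, of constant sign equal (up to sign) to $\sin\angle A_4A_3A_5$, a fixed angle of the rigid triangle; hence the ratio is strictly monotone in $\psi$. Finally $\psi$ is monotone in $x_2$, because $x_1$ is a sinusoid in $\psi$ with $dx_1/d\psi\neq0$ on the interior of $X_k$, so $d\psi/dx_2=(x_1)'/(dx_1/d\psi)$ has constant sign, fixed by $(x_1)'<0$ (Lemma \ref{LemmaLengthsMonot}) and the convexity-determined sign of $dx_1/d\psi$. Composing the two monotonicities and checking the combined sign against the orientation of the convex pentagon gives $(x_3)'/(x_1)'\uparrow$.

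Claim (2) is of the same type but avoids the coupler entirely, since both $x_5$ and $x_2$ are functions of the single angle $\omega$. Writing $x_2=k^2+a_5^2+2ka_5\cos(\omega-\nu_3)$ and $x_5=a_4^2+a_5^2+2a_4a_5\cos(\omega-\nu_4)$, with $\nu_3,\nu_4$ the fixed directions of $\overrightarrow{A_5A_3},\overrightarrow{A_5A_4}$, I get $\frac{dx_5}{dx_2}=\frac{a_4\sin(\omega-\nu_4)}{k\sin(\omega-\nu_3)}$; differentiating once more in $\omega$ and multiplying by $d\omega/dx_2$ produces a second derivative whose sign is the product of $\operatorname{sign}\sin(\nu_4-\nu_3)$ (the fixed angle $\angle A_3A_5A_4$) and $\operatorname{sign}\sin(\omega-\nu_3)$. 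Both factors are pinned down by convexity of the pentagon, and I expect the product to come out so that $-(x_5)''>0$.

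Claim (1) is the real obstacle. Here $x_2=|A_1A_3|^2$ and $x_1=|A_2A_5|^2$ are the two diagonals of the quadrilateral $A_1A_2A_3A_5$, which is convex as a sub-polygon of the convex pentagon; they are therefore governed not by a single crank angle but by the full coupler constraint, equivalently by the Cayley--Menger relation $D_4(x_1,x_2)=0$ with $x_4=k^2$. The assertion $-(x_1)''>0$ is exactly the concavity of the graph $x_1=x_1(x_2)$ along the convex arc. I would establish it in one of two ways: (a) invoke the convexity of the Cayley--Menger curve $C$ for quadrilateral linkages from Section \ref{Sectionquadri} and \cite{kps}, and identify the convex-configuration arc with its concave branch (the one on which $x_1$ decreases as $x_2$ increases); or (b) differentiate $D_4=0$ implicitly twice, using Lemma \ref{DPartLem} for the first partials $\partial_{x_1}D_4,\partial_{x_2}D_4$ and the explicit quadratic dependence of $D_4$ on $x_1,x_2$ for the second partials, then check that $x_1''<0$ once every oriented area carries the sign dictated by the convex orientation. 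The principal difficulty throughout, and most acutely in (1), is the sign bookkeeping: each sign of an oriented area $S_{ijk}$ and of a crank derivative is individually forced by convexity, but one must verify that they combine into the three stated inequalities, and in (1) one must genuinely control a second-order quantity through the nonlinear coupler rather than a single sinusoid.
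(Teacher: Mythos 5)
Your proposal is correct, and for parts (1) and (3) it is essentially the paper's own argument: the paper proves (1) by citing the second-order relation between the diagonals of a quadrilateral linkage from \cite{kps} --- exactly your route (a), applied to the four-bar linkage $A_1A_2A_3A_5$ with sides $a_1,a_2,k,a_5$ --- and proves (3) by writing $(x_3)'/(x_1)'=S_{234}/S_{235}$ (the same division of the identities of Lemma \ref{LemmaAreasVSDerivat} that you perform) followed by the words ``direct computation of the derivative'', which is precisely your crank-angle calculation. Where you genuinely depart from (and in fact sharpen) the paper is part (2). The paper dismisses it with ``the same way as (1)'', but the relevant quadruple $A_1,A_3,A_4,A_5$ is \emph{not} a four-bar linkage: the two variable distances $b_2=|A_1A_3|$ and $b_5=|A_1A_4|$ share the vertex $A_1$, the fixed distances form the rigid triangle $A_3A_4A_5$ plus the pendant bar $A_5A_1$, and the relation between $x_2$ and $x_5$ is a conic traced by two sinusoids in the single angle $\omega$, not the cubic oval of \cite{kps}. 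Your one-angle computation is the natural way to make the paper's one-line claim precise, and it does deliver $-(x_5)''>0$.

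Two caveats, neither fatal. First, you leave the final sign checks as expectations; they do close. Normalizing the pentagon counterclockwise makes every $S_{ijk}$ with cyclically ordered indices positive; then $S_{345}>0$ forces $\sin(\mu_4-\mu_5)<0$, while $S_{235}>0$ together with $(x_1)'<0$ (Lemma \ref{LemmaLengthsMonot}) gives $dx_1/d\psi>0$ and hence $d\psi/dx_2<0$, so the ratio in (3) is the composition of two decreasing functions and is indeed increasing in $x_2$; the analogous bookkeeping in (2), using $S_{345}>0$ and $S_{135}>0$, gives $(x_5)''<0$. Second, in (1) your parenthetical identification of the concave branch as ``the one on which $x_1$ decreases as $x_2$ increases'' is not sufficient by itself: a convex oval has two decreasing arcs, one concave (the ``north-east'' arc) and one convex (the ``south-west'' arc), so monotonicity does not distinguish them. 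You must invoke the fact from \cite{kps} and Section \ref{Sectionquadri} that convex configurations of the quadrilateral $A_1A_2A_3A_5$ map to the arc on which both diagonals are large, equivalently the arc whose outward normal has both components positive; on that arc concavity of the graph $x_1(x_2)$, i.e.\ $-(x_1)''>0$, follows from convexity of the oval.
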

\begin{proof} (1) follows from second order
relation between the diagonals of quadrilateral, see \cite{kps}.

The statement (2)
can be proven the same way as (1).

By  Lemma \ref{LemmaAreasVSDerivat}, $$\frac{(x_3)'}{(x_1)'} =\frac{S_{234}}{S_{235}}.$$  The statement (3) follows now from direct computation of the derivative.\end{proof}

\begin{lemma}\label{SliceSecDer}For a fixed slice $X_k$, we have
$E|_{X_k}''>0$ on  the relative interior of $X_k$.
\end{lemma}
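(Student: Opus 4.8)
The goal is to show that $E|_{X_k}$ is strictly convex on the relative interior of a slice, where the slice is parametrized by $x_2$ and all quantities are functions of $x_2$. I would begin by writing out the effective Coulomb potential in the notation of this section. Recall that $E = \sum_{i<j-1} c_{ij}/|A_iA_j|$, and along a slice the diagonal $b_4 = |A_3A_5|$ is held constant, so the $c_{35}/b_4$ term contributes nothing to the second derivative. The remaining diagonals are $b_1, b_3, b_5$ (together with $b_4$ frozen), so that up to an additive constant
\[
E|_{X_k} = \frac{c_{13}}{b_1} + \frac{c_{14}}{b_5} + \frac{c_{24}}{b_3} + \text{const},
\]
after matching the index conventions to $b_i = |A_{i-1}A_{i+1}|$. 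Here all $c_{ij} \ge 0$. The plan is to differentiate each term twice with respect to $x_2$ and show every contribution is positive.

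The key idea is to pass from the diagonal lengths $b_i$ to the squared diagonals $x_i = b_i^2$, since it is the second derivatives of the $x_i$ that Lemma \ref{lastlemma} controls. Writing $b = \sqrt{x}$, a direct computation gives
\[
\left(\frac{1}{\sqrt{x}}\right)'' = \frac{3}{4}\frac{(x')^2}{x^{5/2}} - \frac{1}{2}\frac{x''}{x^{3/2}}.
\]
For the terms $1/b_1$ and $1/b_5$, Lemma \ref{lastlemma} tells us that $-(x_1)'' > 0$ and $-(x_5)'' > 0$, so both the $(x')^2$ term and the $-x''$ term have the correct sign, making each of $(1/b_1)''$ and $(1/b_5)''$ strictly positive. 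Thus the $c_{13}/b_1$ and $c_{14}/b_5$ contributions are nonnegative, and strictly positive when the corresponding charges are nonzero. The delicate term is $c_{24}/b_3$, because Lemma \ref{lastlemma} does not furnish a sign for $(x_3)''$ directly.

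The main obstacle is therefore the $1/b_3$ term. Here I would exploit item (3) of Lemma \ref{lastlemma}, namely that the ratio $(x_3)'/(x_1)'$ is monotone increasing. Since $(x_1)' < 0$ on the slice interior (this is $(b_1)' < 0$ from Lemma \ref{LemmaLengthsMonot}, noting $(x_1)' = 2b_1 (b_1)'$), the monotonicity of $(x_3)'/(x_1)'$ together with the established sign and convexity information on $x_1$ should let me bound $(x_3)''$ from the appropriate side. Concretely, writing $(x_3)' = \frac{(x_3)'}{(x_1)'}\,(x_1)'$ and differentiating the product, one gets $(x_3)'' = \left(\frac{(x_3)'}{(x_1)'}\right)' (x_1)' + \frac{(x_3)'}{(x_1)'}(x_1)''$; the first summand is negative (positive derivative of the ratio times negative $(x_1)'$), and the second is controlled by $(x_1)'' < 0$ and the sign of the ratio. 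I expect this to yield $(x_3)'' < 0$, or at least a bound strong enough that the $\frac{3}{4}(x')^2 x^{-5/2} - \frac{1}{2} x'' x^{-3/2}$ combination stays positive. Assembling the three terms with their nonnegative coefficients $c_{ij}$ then gives $E|_{X_k}'' > 0$, with strict positivity guaranteed because at least one controlling charge is strictly positive. The only subtlety to check carefully is that none of the $b_i$ degenerate in the relative interior, which holds since degeneracies occur only at the slice boundary.
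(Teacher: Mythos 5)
Your proposal is correct, and it takes a genuinely different route from the paper's. The paper never determines the sign of $(x_3)''$; instead it factors the first derivative as
$$E|_{X_k}' = (-x_1)'\Big(c_{25}b_1^{-3}+ c_{24}b_3^{-3}\tfrac{(-x_3)'}{(-x_1)'}\Big) - c_{13}b_2^{-3} -  c_{14}b_5^{-3}(x_5)',$$
and then argues that the first summand is a product of two positive \emph{increasing} functions while the other two summands are increasing, all of this coming from Lemmas \ref{LemmaLengthsMonot} and \ref{lastlemma}. You instead prove termwise convexity, and your key new ingredient is the deduction $(x_3)''<0$: writing $(x_3)''=\bigl(\tfrac{(x_3)'}{(x_1)'}\bigr)'(x_1)'+\tfrac{(x_3)'}{(x_1)'}(x_1)''$, the first summand is $\le 0$ (increasing ratio, Lemma \ref{lastlemma}(3), times $(x_1)'<0$) and the second is $<0$ (positive ratio, since $(x_1)'$ and $(x_3)'$ are both negative by Lemma \ref{LemmaLengthsMonot}, times $(x_1)''<0$); hence each of $x_1,x_3,x_5$ is strictly concave in $x_2$, and each term $c_{ij}x_k^{-1/2}$ is strictly convex by your formula $(x^{-1/2})''=\tfrac34 x^{-5/2}(x')^2-\tfrac12 x^{-3/2}x''$. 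This reorganization buys a cleaner intermediate geometric fact (concavity of the squared diagonals along a slice), which the paper's grouping trick only exploits implicitly; the paper's version avoids the extra differentiation of the ratio. Two slips should be fixed in your write-up, though neither damages the argument: (i) the term $c_{13}/b_2=c_{13}x_2^{-1/2}$ is \emph{not} constant along the slice --- $x_2$ is precisely the parameter --- so it cannot be absorbed into ``const''; fortunately its second derivative $\tfrac34 c_{13}x_2^{-5/2}$ is positive, so restoring it only strengthens the inequality; (ii) with the convention $b_i=|A_{i-1}A_{i+1}|$, the charge product attached to $b_1=|A_2A_5|$ is $c_{25}$, not $c_{13}$ (the products $c_{25},c_{13},c_{24},c_{35},c_{14}$ go with $b_1,\dots,b_5$ respectively).
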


\begin{proof}
The Coulomb potential for the pentagonal linkage writes as:
$$ E = c_{25}b_1^{-1} + c_{13}b_2^{-1} + c_{24}b_3^{-1} + c_{35}b_4^{-1} + c_{14}b_5^{-1}.$$
The derivative of the potential along the slice $X_k$ writes as:
$$ E|_{X_k}' = c_{25}b_1^{-3}(-x_1)'+ c_{24}b_3^{-3}(-x_3)' - c_{13}b_2^{-3} -  c_{14}b_5^{-3}(x_5)'.$$
Using the fact that $(x_1)' < 0$, we can rewrite this formula as:
$$E|_{X_k}' = (-x_1)'\Big(c_{25}b_1^{-3}+ c_{24}b_3^{-3}\frac{(-x_3)'}{(-x_1)'}\Big) - c_{13}b_2^{-3} -  c_{14}b_5^{-3}(x_5)'.$$
To prove that $E''>0$ it suffices to show that:
$$(-x_1)' > 0,$$
$$\Big(c_{25}b_1^{-3}+ c_{24}b_3^{-3}\frac{(-x_3)'}{(-x_1)'}\Big) > 0,$$
$$(-x_1)'' > 0,$$
$$\Big(c_{25}b_1^{-3}+ c_{24}b_3^{-3}\frac{(-x_3)'}{(-x_1)'}\Big) \uparrow,$$
$$(- c_{13}b_2^{-3}) \uparrow,$$
$$(- c_{14}b_5^{-3}(x_5)') \uparrow.$$
All these statements we know from  Lemma \ref{LemmaLengthsMonot} and
Lemma \ref{lastlemma}.
\end{proof}

\vspace{0.5cm}

Lemma \ref{SliceSecDer} implies  Proposition \ref{PropSliceMin}
straightforwardly since
 $E$ is strictly convex (as the
function in the variable $x_2$) on each of the slices.

\subsection{Proof of Proposition \ref{LemmaAlmostThmUniqueMin}}

As was already mentioned, $M^C(L)$ is a two-dimensional closed disk.
We embed $M^C(L)$ in $\R^5$  by mapping each configuration to the
squared lengths of its diagonals $x_1,\dots,x_5$:
$$\Phi: M(L)\rightarrow \mathbb{R}^5.$$
This mapping sends $M^C(L)$ to its image bijectively so this is
indeed an embedding. The mapping extends by the same rule to the
entire configuration space. However on the entire configuration
space it is not injective.

Now we think of $E$ as of a function defined on $\R^5$.

$$E = \frac{c_{25}}{b_1}+\frac{c_{13}}{b_2}+\frac{c_{24}}{b_3}+\frac{c_{35}}{b_4}+\frac{c_{14}}{b_5}.$$

We will deal with the signs of components of its gradient. As a
matter of fact they all are negative:

$$\nabla E = -\frac{1}{2}\Big{(}
\frac{c_{52}}{(b^2_1)^{\frac{3}{2}}},
\frac{c_{13}}{(b^2_2)^{\frac{3}{2}}},
\frac{c_{24}}{(b^2_3)^{\frac{3}{2}}},
\frac{c_{35}}{(b^2_4)^{\frac{3}{2}}},
\frac{c_{41}}{(b^2_5)^{\frac{3}{2}}} \Big{)} \in [-;-;-;-;-].$$

Here we denoted by $[-;-;-;-;-]$ the set $\R_- \times \R_- \times
\R_- \times \R_- \times \R_-$. In the sequel we use analogous
notations regarding various combinations of signs of expressions in
question.

Let $\gamma(t) =
(\gamma_1(t),\gamma_2(t),\gamma_3(t),\gamma_4(t),\gamma_5(t))$,
where $t \in[0,1]$ be a $C^2$-smooth  curve in $\R^5$.  Later we
shall assume that $\gamma$ is the polar curve but now we consider
just any smooth curve. We have

$$E(\gamma(t)) = \sum{\frac{c_{ij}}{(\gamma_k)^{\frac{1}{2}}}},$$
where the sum is over all triples such that $i+1=k(mod \, 5),\
k+1=j(mod \, 5)$. We denote by prime $'$ the derivative
$\frac{d}{dt}$ and compute

$$E(\gamma(t))' = -\frac{1}{2}\sum{\frac{c_{ij}(\gamma_k)'}{(\gamma_k)^{\frac{3}{2}}}},$$

$$E(\gamma(t))'' = \frac{3}{4}\sum{\frac{c_{ij}[(\gamma_k)']^2}{(\gamma_k)^{\frac{5}{2}}}}-
\frac{1}{2}\sum{\frac{c_{ij}(\gamma_k)''}{(\gamma_k)^{\frac{3}{2}}}}.$$

From this we conclude that if the function
$$-\frac{1}{2}\sum{\frac{c_{ij}(\gamma_k)''}{(\gamma_k)^{\frac{3}{2}}}}=
\<\nabla E, (\gamma)''\>$$ is non-negative on the curve  $\gamma$,
then $E$ has a single critical point on the curve $\gamma$.
%%So do we eventaullyt speak of ANY criticla point or just MINIMUM ?!

We remind that we denote by $D_1$ the Cayley-Menger determinant for
the points $A_2,A_3,A_4,A_5$, denote by $D_2$ the Cayley-Menger
determinant for for the points $A_1,A_3,A_4,A_5$, and so on.

The system
$$D_1=D_2=...=D_5=0$$  defines a surface which contains the image of
$M^C(L)$ under the above described mapping.

Let us consider the function $D_2$ separately.
 It is a polynomial of  degree $3$ in
variables $x_2, x_4,x_5$. It doesn't depend on $x_1, x_3$.

\bigskip

Consider an open 3-arm $R$ with edgelengths $a_3,a_4,a_5$. It is a
subchain of our $5$-linkage. The configuration space $M(R)$ of the arm is homeomorphic to the $2$-torus.

We map the configuration space of the arm  to $\R^3$  by mapping
each configuration to the squared lengths of its three diagonals
$b_4,b_5$ and $b_2$:

$$\Phi_2: M(R) \rightarrow \mathbb{R}^3.$$

The image of $\Phi_2$  belongs to the set $D_2 = 0$.

We denote by $M^C(R)$  those configurations of the arm that
are subconfigurations of some element of $M^C$, that is, that are
extendable to a convex pentagon.

\begin{lemma} \begin{enumerate}
                \item $\Phi_2$ maps $(M^C(R))$  to its image bijectively.
                \item $\Phi_2(M^C(R))$ is a convex surface (with  boundary) in $\R^3$.
                \item The map  $\Phi_2$  on $M^C(R)$ has maximal rank except for aligned configurations.
              \end{enumerate}

\end{lemma}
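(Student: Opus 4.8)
The plan is to avoid working implicitly with the cubic $\{D_2=0\}$ and instead parametrize the convex-extendable arm $R=A_3A_4A_5A_1$ by its two interior angles, reducing all three claims to one explicit two-variable computation. First I would place the middle edge $A_4A_5$ on a fixed segment and take as coordinates $\theta_4=\angle A_3A_4A_5$ and $\theta_5=\angle A_4A_5A_1$, both in $(0,\pi)$ for convex configurations, with $A_1$ and $A_3$ on the same side of the line $A_4A_5$ (which is exactly the convexity of the quadrilateral $A_1A_3A_4A_5$). The law of cosines in the triangles $A_3A_4A_5$ and $A_4A_5A_1$ gives $x_4=a_3^2+a_4^2-2a_3a_4\cos\theta_4$ and $x_5=a_4^2+a_5^2-2a_4a_5\cos\theta_5$, so $x_4$ depends only on $\theta_4$ and $x_5$ only on $\theta_5$. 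A short computation of $|A_1A_3|^2$ then yields the key identity
\[ x_2 = x_4 + x_5 - a_4^2 + 2a_3a_5\cos(\theta_4+\theta_5). \]

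From this, parts (1) and (3) are essentially immediate. Since $\theta\mapsto\cos\theta$ is strictly monotone on $(0,\pi)$, each of $\theta_4\mapsto x_4$ and $\theta_5\mapsto x_5$ is a bijection onto an interval; hence $(x_4,x_5)$ already recovers $(\theta_4,\theta_5)$ and with it the whole configuration, proving injectivity of $\Phi_2$ on $M^C(R)$ and showing that the image is a graph over the $(x_4,x_5)$-plane. For maximal rank, in the coordinates $(\theta_4,\theta_5)$ the Jacobian of $\Phi_2$ has, in its $x_4,x_5$ columns, the block $\mathrm{diag}\bigl(2a_3a_4\sin\theta_4,\,2a_4a_5\sin\theta_5\bigr)$, whose determinant $4a_3a_4^2a_5\sin\theta_4\sin\theta_5$ vanishes exactly when $\sin\theta_4=0$ or $\sin\theta_5=0$, that is, precisely at the aligned configurations; elsewhere $\Phi_2$ is an immersion.

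The crux is part (2). By the previous paragraph the image is the graph $x_2=g(x_4,x_5)$, so I must show $g$ is convex. Because $u:=\cos\theta_4$ and $v:=\cos\theta_5$ are \emph{affine} functions of $x_4$ and $x_5$, convexity is preserved by passing to the $(u,v)$ coordinates, in which $g$ becomes a constant plus a linear part plus the bilinear term $2a_3a_5uv$ minus $2a_3a_5\sqrt{1-u^2}\sqrt{1-v^2}$. Only the last two terms affect the Hessian, so $\mathrm{Hess}(g)=2a_3a_5\,\mathrm{Hess}(\psi)$ with $\psi(u,v)=uv-\sqrt{1-u^2}\sqrt{1-v^2}$. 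Writing $p=\sqrt{1-u^2}$ and $q=\sqrt{1-v^2}$, I would compute $\psi_{uu}=q/p^3>0$, $\psi_{vv}=p/q^3>0$, and $\psi_{uv}=1-uv/(pq)$, and then factor the determinant as
\[ \det \mathrm{Hess}(\psi) = \frac{1}{(pq)^2}-\Bigl(1-\frac{uv}{pq}\Bigr)^2 = \frac{(1+uv-pq)(1-uv+pq)}{(pq)^2}. \]

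The hard part will be the sign of this determinant. The factor $1-uv+pq$ is clearly positive since $uv\in(-1,1)$ and $pq>0$. For the factor $1+uv-pq$ the decisive point is the algebraic identity $(1+uv)^2-(1-u^2)(1-v^2)=(u+v)^2\ge 0$, which gives $1+uv\ge pq$ and hence nonnegativity of the first factor. Therefore $\mathrm{Hess}(\psi)\succeq 0$ with $\psi_{uu}>0$, so $g$ is convex and its graph is a convex surface, completing (2). I expect this nonnegativity step — reducing the geometric convexity to the clean identity $(u+v)^2\ge0$ — to be the only genuinely non-routine point; the rest is bookkeeping in the angle parametrization.
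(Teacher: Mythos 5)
Your proof is correct, but it takes a genuinely different route from the paper's. The paper argues globally and algebraically: the image of the whole configuration space of the arm is a closed surface lying on the cubic $\{D_2=0\}$; a generic line meets a cubic in at most three points, while it meets a closed surface in an even number of points, hence in at most two; and a closed surface meeting generic lines at most twice bounds a convex body. You instead parametrize by the two angles $\theta_4,\theta_5$, exhibit the image explicitly as a graph $x_2=g(x_4,x_5)$, and verify convexity by a direct Hessian computation, reducing everything to the identity $(1+uv)^2-(1-u^2)(1-v^2)=(u+v)^2$. What each approach buys: yours is elementary and self-contained, it genuinely proves parts (1) and (3) --- which the paper's argument does not address explicitly, being aimed only at (2) --- and it even locates where the convexity degenerates (the Hessian determinant vanishes exactly when $\theta_4+\theta_5=\pi$); the paper's argument buys brevity and stays within the Cayley--Menger framework used throughout, at the cost of being sketchy (it presupposes that the image is an embedded closed surface bounding a body). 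Two small points in your write-up deserve a touch-up, though neither is a gap. First, the condition that $A_1,A_3$ lie on the same side of the line $A_4A_5$ with $\theta_4,\theta_5\in(0,\pi)$ is necessary but not sufficient for convexity of the quadrilateral $A_1A_3A_4A_5$; this is harmless, since all you need is that $M^C(R)$ is contained in your chart. Second, in part (3) the vanishing of your chosen $2\times2$ minor does not by itself mean the full $3\times2$ Jacobian drops rank: at $\theta_4=\pi$, $\theta_5\in(0,\pi)$ the row $\partial x_2/\partial(\theta_4,\theta_5)$ still gives rank $2$, and one checks the rank truly drops only when $\sin\theta_4=\sin\theta_5=0$. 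But the implication the lemma actually requires --- maximal rank at every non-aligned configuration --- is exactly what your determinant computation delivers, so the proof stands.
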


Proof. The surface $\Phi_2(M(R))$ is a closed surface
contained in $D_2=0$. The surface $\Phi_2(M(R))$ bounds in
$\R^3$ some  body. The image of $\Phi_2$  belongs to the set
$D_2=0$. $D_2$ is a polynomial of degree three, therefore each
generic line  intersects $D_2=0$ at at most three points. Since a
line intersects a closed surface at an even number of points, each
generic line  intersects $\Phi_2(M(R))$ at most at two points.
\qed

Notice that this surface is contained in a "box" in the positive
octant.

\bigskip

From Theorem \ref{ThmCayleyMenger} we know the signs of all entries of
all the $D_i$. For example,

$$\nabla D_2 = (0; \ 16S_{345}S_{145};\ 0;\ -16S_{134}S_{145};\ -16S_{345}S_{135}).$$

We present all these signs in the following table:

 \bigskip

\begin{center}
\begin{tabular}{|l|l|l|l|l|l|}
  \hline
  % after \\: \hline or \cline{col1-col2} \cline{col3-col4} ...
  $\nabla D_1$ & $+$ & 0 & $-$ &$-$ & 0 \\
  \hline
  $\nabla D_2$ & 0 & + & 0 & $-$ & $-$ \\
   \hline
  $\nabla D_3$ & $-$ & 0 & + & 0 & $-$ \\
   \hline
  $\nabla D_
  4$ & $-$ & $-$ & 0 & + & 0 \\
   \hline
$ \nabla D_5$ & 0 & $-$ & $-$ & 0 & + \\
  \hline
\end{tabular}
\end{center}

%Similar
% statements hold for other $D_i$:
% $$\nabla D_1 =
%(+;0;-;-;0), \ \ \ \nabla D_3
% = (-;0;+;0;-),$$ $$\nabla D_4 =
%(-;-;0;+;0),\ \ \ \nabla D_5 = (0;-;-;0;+).$$

\bigskip

\begin{lemma} The gradient
$\nabla D_2$ is the inner normal vector of the surface $D_2=0$.
Similar statements hold for other $D_i$.
\end{lemma}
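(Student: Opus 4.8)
The plan is to use the fact that $\nabla D_2$ is automatically normal to the level surface $\{D_2=0\}$, so that only its \emph{direction} (inward or outward relative to the body bounded by $\Phi_2(M(R))$) needs to be pinned down. By the preceding lemma $\Phi_2$ has maximal rank away from aligned configurations, hence $\nabla D_2\neq 0$ at every non-aligned point of the surface, and there it is a nonzero normal vector. Since the normal direction varies continuously and the body is connected, it suffices to determine the sign of $\nabla D_2$ at a single generic point; equivalently, it suffices to decide the sign of $D_2$ in the interior of the body, because $\nabla D_2$ points in the direction of increasing $D_2$.

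To fix this sign I would argue geometrically, exploiting $D_2=288V^2$, where $V$ is the volume of the tetrahedron $A_1A_3A_4A_5$. Fix the two squared diagonals $x_4=|A_3A_5|^2$ and $x_5=|A_4A_1|^2$ and move along the line in the $x_2$-direction. Fixing $x_4$ rigidifies the triangle $A_3A_4A_5$ and fixing $x_5$ rigidifies the triangle $A_4A_5A_1$; these two triangles share the edge $A_4A_5$, and the only remaining freedom is the dihedral angle along this edge. The two coplanar positions of this hinge are exactly the two points $x_2^-<x_2^+$ where the line meets the surface, and there $V=0$, so $D_2=0$. For every intermediate dihedral angle the four points form a genuine tetrahedron, so $V>0$ and $D_2=288V^2>0$. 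By the earlier lemma every generic line meets $\Phi_2(M(R))$ in at most two points, so the body is convex and the open segment $(x_2^-,x_2^+)$ is precisely the part of this line lying in the interior of the body. Hence $D_2>0$ throughout the interior.

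It follows that at a boundary point the inward-pointing direction along the $x_2$-line is a direction of strictly increasing $D_2$, so $\langle\nabla D_2,\nu\rangle>0$ for the inner normal $\nu$; since $\nabla D_2$ is parallel to $\nu$, we obtain $\nabla D_2=c\,\nu$ with $c>0$, i.e. $\nabla D_2$ is the inner normal. The statements for the remaining $D_i$ then follow from the cyclic symmetry of the whole construction: the relabelling $A_i\mapsto A_{i+1}$ carries $D_2$ to $D_i$ and the corresponding arm, surface and body to one another, so the identical argument applies verbatim.

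I expect the main obstacle to be the bookkeeping that links the purely algebraic sign of the cubic $D_2$ to the geometric notion of being inside or outside the body, while remaining at regular points of the surface; the convexity of the body supplied by the preceding lemma is exactly what makes the one-line-at-a-time argument legitimate. As an independent check one could instead evaluate the explicit gradient $\nabla D_2=(0;\,16S_{345}S_{145};\,0;\,-16S_{134}S_{145};\,-16S_{345}S_{135})$ at a fixed convex configuration using the sign table, but that route requires tracking the orientation conventions for the oriented areas $S_{ijk}$, which the volume argument sidesteps.
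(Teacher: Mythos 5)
Your argument is correct in substance, and its decisive step is genuinely different from the paper's. Both proofs reduce, via continuity and the convexity of the body bounded by $\Phi_2(M(R))$, to a one-dimensional check along a coordinate line whose two boundary intersections are the two planar (hinge-flat) configurations. But the paper fixes $x_2$ and $x_4$, notes that of the two resulting planar quadrilaterals the convex one has the larger $x_5$, and then reads the sign of the $x_5$-component of $\nabla D_2$ from the sign table obtained from Theorem \ref{ThmCayleyMenger} (it equals $-16S_{345}S_{135}<0$), so the gradient points down the interval, i.e.\ inward. You instead fix $x_4$ and $x_5$, identify the open segment $(x_2^-,x_2^+)$ with the genuine tetrahedra swept out by folding the two rigid triangles $A_3A_4A_5$ and $A_4A_5A_1$ about the hinge $A_4A_5$, and invoke the identity $D_2=288V^2$ (stated in Section \ref{CMSection}) to get $D_2>0$ throughout the interior of the body; since $\nabla D_2$ points toward increasing $D_2$, it must be the inner normal. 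Your route is arguably more robust: it determines the sign of the function rather than of a single gradient component, and, as you note, it sidesteps the orientation conventions for the oriented areas $S_{ijk}$, which is exactly the bookkeeping the paper's computation depends on. The paper's route is shorter once the sign table is in hand, and it is what generalizes verbatim to the other $D_i$ there; your cyclic-relabelling remark recovers the same generality.

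One step of yours needs repair. The inference ``$\Phi_2$ has maximal rank away from aligned configurations, hence $\nabla D_2\neq 0$ on the surface'' is a non sequitur: a polynomial can have identically vanishing gradient along a perfectly smooth sheet of its zero set (consider $z^3=0$), so smoothness or maximal rank of the parametrization does not control $\nabla D_2$. The nonvanishing you need should instead be read off from the explicit formula
$\nabla D_2 = (0;\ 16S_{345}S_{145};\ 0;\ -16S_{134}S_{145};\ -16S_{345}S_{135})$,
whose nonzero entries are products of areas of nondegenerate triangles at non-aligned configurations. You mention this formula only as an ``independent check'', but it is in fact needed to make rigorous both the claim that $\nabla D_2$ spans the normal line of the boundary surface and the strictness of the inequality $\langle\nabla D_2,\nu\rangle>0$; with that substitution your proof is complete.
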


Proof. It is sufficient to check if the gradient points inside or
outside for one point only. We assume that
 we pick a pentagon without any aligned  edges. We reduce the dimension as follows. First fix $x_2$ and after
 that $x_4$. There are only two quadrilaterals satisfying this condition:
one non-convex and one convex (which has bigger $x_5$). The
intersection with the convex body is an interval. The $x_5$
component of the gradient is there negative, so the gradient vector
points inside. \qed

%Let $\gamma(t) = \gamma(x_4)$ be the (image under $\Phi$ of) polar curve parameterized by $x_4$.

\begin{rem}
The curve  $\Gamma \cap M^C(L)$ is given by  $E'=0$ and therefore is
an algebraic curve. (We remind that $E'$ is the derivative of $E$
along the slice $X_k$ parameterized by $x_2$). Lemma
\ref{SliceSecDer} claims that $E''>0$ on $X_k$. Hence, we conclude
(with the implicit function theorem) that $\Gamma \cap M^C(L)$ is a
smooth curve which intersects $X_k$ transversally.
\end{rem}

\bigskip

Let $\gamma(t) = \gamma(x_4)$ parameterize the image $\Phi(\Gamma)$
of the polar curve.

We are now able to prove Proposition \ref{LemmaAlmostThmUniqueMin}
by establishing the  inequality $$\<\nabla E, \gamma''\> \geq 0
\hbox{ on each of the branches $\Gamma_j$.}$$

Assume that a point $\gamma(t)$ does not lie on the boundary of
$\Phi(M^C)$.
 The curve $\gamma$ lies on
$\Phi(\overline{M^C(L)})$ which is the part of the
 convex component   of $D_i =
0$  for each $i$. As we explained above, this component of $D_i = 0$
is the boundary of some convex body, so by the Darboux formulae for the
normal curvature we have:

$$\<\gamma(t)'';\nabla D_i\> \ge 0$$

%%\subsection{The tangent vector of slice comes into play }

Let now $\zeta $ be a tangent vector of the slice at the point
$\gamma(t)$.  By Lemma \ref{LemmaLengthsMonot} we can assume that
$$\zeta \in [-;+;-;0;+].$$

By the definition of polar curve we have $\<\nabla E, \zeta\> = 0$,
and obviously $\<\nabla D_i, \zeta\> = 0$. To show that $\<\nabla E;
\gamma(t)''\> \ge 0$, it suffices  to show that $\<\nabla E;
\gamma(t)'' + \lambda \zeta \> \ge 0$ for some $\lambda$. For any
$\lambda$, we have already shown that $\<\nabla D_i ; \gamma(t)'' +
\lambda \zeta \> \ge 0$.

%%\subsection{Final step: analysis of signs}

Since $\gamma(t)$ is parameterized by $x_4$, we have $\gamma(t)''
\in [*;*;*;0;*]$, where  $*$ denotes entries of unknown signs. Since
$\<\nabla D_4; \gamma(t)''\> \ge 0$,  only three cases are possible:

$$\gamma(t)'' \in [-;-;*;0;*], \ \  \gamma(t)'' \in [+;-;*;0;*], \ \hbox{  and } \gamma(t)'' \in
[-;+;*;0;*]$$.

We treat these cases separately.
\begin{enumerate}
  \item The first case is simple: since $\<\nabla D_1;
\gamma(t)''\> \ge 0$, $\<\nabla D_2;\gamma(t)''\> \ge 0$  we have  $\gamma(t)'' \in (-;-;-;0;-)$, and
$\<\nabla E; \gamma(t)''\> \ge 0$, since $\nabla E\in(-,-,-,-,-)$.
This completes the proof of Proposition
\ref{LemmaAlmostThmUniqueMin}.
  \item In the second case we  use $\<\nabla D_2; \gamma(t)''\> \ge 0$ and get $\gamma(t) \in (+;-;*;0;-)$. Here we  have two cases: $\gamma(t)'' \in (+;-;+;0;-)$ and $\gamma(t)'' \in (+;-;-;0;-)$.
Assume we have $\gamma(t)'' \in (+;-;+;0;-)$ (the other case is
treated similarly).
 Let us take $\gamma(t)'' + \lambda \zeta$ and look how the signs change when we continuously increase $\lambda$ from $0$ to $+\infty$.
  We start from $(+;-;+;0;-)$ and go to $\zeta \in (-;+;-;0;+)$, which means that all the entries (except for $0$) change their signs.
   Let us enumerate the signs this way: $(+_1;-_2;+_3;0;-_5)$. The inequality $\<\nabla D_4 ; \gamma(t)''+ \lambda \zeta\> \ge 0$ implies that the first sign  changes before the second.
    The inequality $\<\nabla D_1 ; \gamma(t)''+ \lambda \zeta\> \ge 0$
     implies  that the third  sign  changes before the first. $\<\nabla D_2 ; \gamma(t)''+ \lambda \zeta\> \ge 0$
      implies that the second sign  changes before the fifth. So at some moment we necessarily have \newline
      $\gamma(t)'' + \lambda \zeta \in (-;-;-;0;-)$.
       Then $\<\nabla E; \gamma(t)'' + \lambda \zeta \> \ge 0$, which implies \newline $\<\nabla E; \gamma(t)''\> \ge 0$, and we are done.
  \item The third case is treated similarly to the second one.
\end{enumerate}

The proof of Proposition \ref{LemmaAlmostThmUniqueMin} is now
completed.\qed

\subsection{Proof of Theorem \ref{TheoremDirk}.}\label{TheoremDirkProof}

The equilateral case was already proven in \cite{kps}. We follow its proof.
We rewrite potential in the form

$$E=\frac{q_1q_4}{b_5}+\frac{q_2q_4}{b_3}+\frac{q_1t}{b_2}+\frac{q_2s}{b_1}+ \frac{st}{b_4}.$$

Take now the diagonals $b_{4}$ and $b_{2}$ as local coordinates in a
neighborhood of $P$.

The polygon $P$ is a critical point of $E$  means that $dE$
vanishes:
$$-\frac{\partial E}{\partial b_4}=  \frac{q_1q_4\alpha_1}{b_5^2}+ \frac{q_2q_4\beta_1}{b_3^2}+ \frac{sq_2\gamma_1}{b_1^2}+\frac{st}{b_4^2}= 0,
$$
and

$$-\frac{\partial E}{\partial b_2}=  \frac{q_1q_4\alpha_2}{b_5^2}+ \frac{q_2q_4\beta_2}{b_3^2}+ \frac{q_1t}{b_2^2}+\frac{q_2s\gamma_2}{b_1^2}= 0,
$$

where  $$\alpha_1=\partial b_5/
\partial b_4, \
 \beta_1=\partial b_3/ \partial
b_4, \
 \ \gamma_1=\partial b_1/ \partial
b_4, \hbox{  \  and}$$

$$\alpha_2=\partial b_5/
\partial b_2, \
 \beta_2=\partial b_3/ \partial
b_2, \
 \ \gamma_2=\partial b_1/ \partial
b_2 \; .$$

We get a system in two variables $s$ and $t$ which reduces to the
following quadratic equation in $s$:
$$A+Bs+Cs^2=0$$
with
$$A=\frac{q_1q_4\alpha_1}{b_5^2}+\frac{q_2q_4\beta_1}{b_3^2},$$
$$B=\frac{q_2\gamma_1}{b_1^2}-\frac{b_2^2}{b_4^2}(\frac{q_4\alpha_2}{b_5^2}+\frac{q_2q_4\beta_2}{q_1b_3^2}),$$
$$ \hbox{and \ \ \ }C=-\frac{b_2^2\gamma_2q_2}{q_1b_4^2b_1^2}.$$

Since  $AC$ is negative  by Lemma \ref{LemmaAreasVSDerivat},
 the equation has exactly one real positive solution $s$.
By Lemma \ref{LemmaPlusMinus}, $t$ is also positive, which completes
the proof.

\section{Concluding remarks}

By Proposition \ref{LemmaGeorge} for any $n$-gon we get
a system $S(n)$ of quadratic equations for the stabilizing charges.
The existence and structure of real solutions to this
system can be analyzed using topological methods of real algebraic
geometry \cite{Agr}. Finding the number of positive solutions to the
reduced system $S'(n)$ is also possible using methods of \cite{AgrLer}.
So it is still unclear whether a similar Coulomb control is possible for bigger number of edges. However, our expectations are:
(1) the Coulomb potential has a unique critical point (which is the global   minimum) in the domain of convex configurations, 
(2) for a complete control, the  non-ruling  charges should not be put at three consecutive vertices, and 
(3)   a convex configuration may have several collections of positive stabilizing charges.

Theorem \ref{TheoremDirk} implies that we can navigate from any
convex configuration to another convex configuration along any path
joining their stabilizing charges in the space of charges. Since the
space of charges is convex, one can use just the segment joining the
stabilizing charges. It will be interesting to visualize the arising
movement of linkage in the configuration space. This enables one to
consider several natural versions of the optimal control problem for
vertex-charged pentagonal linkages in various contexts.

\end{document}